\theoremstyle{definition}
\newtheorem{Def}{Definition}[section]
\theoremstyle{plain}
\newtheorem{Teorema}[Def]{Theorem}
\theoremstyle{definition}
\newtheorem{Ex}[Def]{Example}
\theoremstyle{plain}
\newtheorem{Lemma}[Def]{Lemma}
\newtheorem{Defin}[Def]{Definition}
\theoremstyle{remark}
\newtheorem{rmk}[Def]{Remark}
\theoremstyle{plain}
\newtheorem{Corollary}[Def]{Corollary}
\newcommand{\numberset}{\mathbb}
\newcommand{\R}{\numberset{R}}
\newcommand{\C}{\numberset{C}}
\newcommand{\n}{\nabla}
\newcommand{\de}{\partial}
\newcommand{\proj}{\mathbb{P}}
\title{On a  Modification of the Twistor Space}
\begin{document}

\author{Anna Fino}
\address[Anna Fino]{Dipartimento di Matematica ``G. Peano'', Universit\`{a} degli studi di Torino \\
Via Carlo Alberto 10\\
10123 Torino, Italy\\
\& Department of Mathematics and Statistics, Florida International University\\
Miami, FL 33199, United States}
\email{annamaria.fino@unito.it, afino@fiu.edu}

\author{Gueo Grantcharov}
\address[Gueo Grantcharov]{Department of Mathematics and Statistics \\
Florida International University\\
Miami, FL 33199, United States
\& Institute of Mathematics and
Informatics, Bulgarian Academy of Sciences\\
 8 Acad. Georgi Bonchev Str. 1113 Sofia, Bulgaria}

\email{grantchg@fiu.edu}

\author{Alberto Pipitone Federico}
\address[Alberto Pipitone Federico]{Department of Mathematics \\
University of Tor Vergata\\
Via delle Ricerca Scientifica 1 \\
 00133 Roma, Italy}
\email{pipitone@mat.uniroma2.it}

\keywords{Twistor space, Stein space,  Balanced metric}

\subjclass[2020]{53C28, 32L25}

\maketitle

\begin{abstract} In the paper we  construct  a modification $S(M)$ of the twistor space of a K\"ahler scalar flat surface $M$ and study its complex-geometric and metric properties.  In particular, we construct complete  balanced metrics on $S(M)$ and show that 
$S(M)$ can not be  K\"ahler when $M$  is a compact  simple  hyperk\"ahler manifold.
 \end{abstract}

\section{Introduction}

\smallskip

Twistor theory was initiated by R. Penrose  \cite{Penrose} in the 60's as a tool to transform solutions of some physical nonlinear differential equations into holomorphic objects, which could be studied with the tools of complex geometry. The theory was formalized and generalized by Atiyah,  Hitchin and  Singer \cite{AHS}. In \cite{AHS} the twistor space ${\mbox{Tw}}(M)$ of a 4-dimensional oriented Riemannian manifold $(M, g)$  is defined as the associated bundle to the principal bundle of orhonormal positive frames, with fiber all complex structures in ${\mathbb R}^4$ compatible with the Euclidean scalar product. It is naturaly equipped with a tautological almost complex structure $J$, defined in terms of the projection $\pi: {\mbox {Tw}} (M) \rightarrow M$. More precisely, the Levi-Civita connection induces a splitting $$T({\mbox {Tw}}(M)) = {\mathcal H} \oplus {\mathcal V},$$  where ${\mathcal H}$ is the horizontal space such that $\pi_*: {\mathcal H} \rightarrow TM$ is an isomorphism and $\mathcal V$ is tangent to the fibers of $\pi$. The fibers of $\pi$ are naturally identified with $S^2\equiv {\mathbb {CP}}^1$ and $J$ on ${\mathcal V}$ is defined as the canonical complex structure on $S^2$. While on $\mathcal H$ at a point $p\in {\mbox {Tw}} (M)$ considered as a complex structure on $T_{\pi(p)}M$ the structure $J|_p$ is just the pull-back of $p$ to ${\mathcal H}$ via the isomorphism $\pi_*$.

We recall that  for a $4$-dimensional   oriented Riemannian  manifold $(M,g)$,  the space of $2$-forms 
decomposes as self-dual and anti-self-dual $2$-forms 
$$
\Lambda^2 = \Lambda^2_+ \oplus \Lambda^2_-,
$$ 
according to the eigenvalues of the Hodge star operator $\star$.  A  $2$-form $\alpha$  is called self-dual if $\star \alpha = \alpha$. Then the curvature operator of $(M, g)$ takes the following  form according to the above   decomposition of 2-forms
$$
{\mathcal R} = \left( \begin{array}{cc} W_+ +  \frac {\text{Scal} (g)}{12}  {\mbox {Id}} & \text{Ric}^0 \\[4 pt]  \text{Ric}^0  & W_- +  \frac {\text{Scal}(g)}{12}  {\mbox {Id}}  \end{array} \right ),
$$
where  $W$ is the Weyl tensor, $\text{Ric}^0$ comes from  the trace-free Ricci curvature and $\text{Scal} (g)$ denotes  the scalar curvature of $g$. If $W_+ =0$,  then $g$ is said to be an anti-self-dual metric.

 The main result of \cite {AHS},  which is also implicit in the works of R. Penrose, is that $({\mbox{Tw}}(M),J)$ is a complex manifold, i.e. $J$ is integrable, if and only if $(M,g)$ is anti-self-dual. The identification of the fibers of $\pi$ can be realized as the unit spheres in the space $\Lambda^2_+$ of the self-dual $2$-forms.

Since \cite{AHS} the twistor theory has been generalized in many ways. A straightforward generalization to higher dimensional manifolds leads to complex twistor space only for conformally flat manifolds \cite{Berard-B-Ochiai, O'Brian-Rawnsley}. J. Eells and S. Salamon  \cite{ES} considered the complex structure $\tilde{J}$ which on $\mathcal V$ is with the opposite sign and applied the construction to the minimal surfaces theory. It is known however that the induced almost complex structure on the twistor space is never integrable, but it is very useful in harmonic maps theory.  S. Salamon extended the theory to higher dimensional quaternionic manifolds  \cite{Salamon} which also have  an  integrable almost complex structure. The twistor space of ${\mathbb R}^{4n}$ can be identified with the total space of the bundle $\mathcal{O}(1)^{2n}$  over ${\mathbb {CP}}^1$. This construction  was generalized by C. Simpson to the so-called mixed twistor structures \cite{Simpson}, in relation with  mixed Hodge theory.

More recently, the  twistor space approach provided also a  powerful tool for understanding the  geometry of Joyce structures on complex manifolds \cite{Joyce, Bridgeland}. A Joyce structure involves a one-parameter family of flat and symplectic non-linear connections on the tangent
bundle $TM$  and gives rise to a  complex  hyperk\"ahler structure on $TM$.

For any hypercomplex manifold the twistor space admits also a holomorphic projection onto ${\mathbb {CP}}^1$, generalizing the projection $\pi_1: \mathcal{O}(1)^{2n}\rightarrow  {\mathbb {CP}}^1$. The approach in \cite{Bridgeland} uses this property for the twistor space of $TM$, for a manifold $M$ endowed with a Joyce structure.  In another generalization,  C. LeBrun \cite{Lebrun} considered a ramified cover $\mu:\Sigma\rightarrow {\mathbb {CP}}^1$ which can be lifted to a ramified cover of the twistor space. For appropriate $\Sigma$ the construction leads to a compact complex manifold with trivial canonical bundle and was used by T. Fei \cite{Fei} to find solutions of the Hull-Strominger system.  Other works study  the twistor spaces of generalized complex structures \cite{DM, Deschamps} and maps between twistor spaces \cite{Davidov}.

In this paper, we consider a different modification of the twistor spaces of K\"ahler scalar flat surfaces $(M, I, g, \omega)$. For such surfaces the structure group of the bundle $\Lambda^2_+ M$ of self-dual $2$-forms  can be reduced to $U(1)$ and any surface $S$ of rotation in ${\mathbb R}^3$ could be the fiber of an associated bundle $S(M)$. Taking a map from the unit sphere to  the surface $S$ that  commutes with the rotations  defines an induced almost complex structure $J$ on $S(M)$. We first investigate the integrability of such structure. Our main result (Theorem \ref{Th2.5})  shows that  $J$   is integrable on $S(M)$ if and only if the map induces a biholomorphism between $S(M)$ and a domain in ${\mbox{Tw}}(M)$.  We  then consider in more details the case where $S(M)$ is ${\mbox{Tw}}(M)-\{D_1,D_2\}$, namely the twistor space with the sections corresponding to $\pm I$ deleted. We note that the images of these sections $D_1, D_2$,  represent  the canonical divisor of ${\mbox{Tw(M)}}$ as $K=2D_1+2D_2$ (cf. \cite{Hitchin, Pontecorvo}).

The metric properties of the twistor spaces were also an object of intensive study.  N. Hitchin \cite{Hitchin}  showed that ${\mbox{Tw}}(M)$ is K\"ahler if and only if $(M, g)$ is isometric to $S^4$ or ${\mathbb {CP}}^2$. In general, when  a 4-dimensional oriented Riemannian manifold $(M, g)$  is anti-self-dual, the natural metric on ${\mbox {Tw}}(M)$ is balanced \cite{Michelsohn, Gauduchon2, Oleg}.  In particular
the twistor space of any compact anti–self–dual $4$-manifold admits a balanced metric, but only the ones which are in addition Einstein with positive scalar
curvature admit a K\"ahler metric. Further works about the almost Hermitian geometry of the twistor spaces include  \cite{ADM, Deschamps2, Catino-Dameno-Mastrolia}.

 In  Section  3 we  show that any conformal change of the metric on the fiber of ${\mbox{Tw}}(M)$ still provides a balanced metric  on the complex manifold $(S(M), J)$ (Theorem \ref{balancedness}), and some of these balanced metrics  are complete (Corollary \ref{Cor-completness}).  In Section 4 we prove  that, if $Tw(M)$ is the twistor space of a  compact simple  hyperk\"ahler manifold $M$,   as long as one deletes from $Tw(M)$
finetely many divisors,  $Tw(M)$ remains non-K\"ahler (Theorem \ref{Th4.1}).  Therefore,  $S(M)$ cannot be    K\"ahler  when $M$ is a compact simple hyperk\"ahler manifold. We note however, that the twistor space of ${\mathbb R}^{4n}$ becomes Stein when one divisor corresponding to a section is deleted.

The paper is organized as follows. In Section 2  we introduce the construction of the modified twistor space $S(M)$,  we define the induced almost complex structure $J$, and we prove the integrability criterion. 
Section 3 is devoted to the construction of balanced metrics on $S(M)$.  In Section 4 we study the modified twistor space of a hyperk\"ahler manifold.

\smallskip

\textbf{Acknowledgments:}  The authors thank Misha Verbisky for the useful discussions and for sharing with us the main idea of the proof of Theorem 4.1. Anna Fino  is  partially supported by Project PRIN 2022 \lq \lq Real and complex manifolds:
Geometry and Holomorphic Dynamics”, by GNSAGA (Indam) and by   a grant from the
Simons Foundation (\#944448).  Gueo Grantcharov is partially supported by a grant from
the Simons Foundation (\#853269).  Some of the work was done while the third author visited the  Institute of Mathematics and Informatics of the Bulgarian Academy of Sciences,  he is grateful for their hospitality.

\section{Construction and basic properties of the modified twistor space $S(M)$}

\subsection{Construction of $S(M)$} 

Let $(M,g)$ be a four dimensional Riemannian manifold and let $\Lambda^2_+ M \to M$ be the sub-bundle of the bundle of 2-forms $\Lambda^2M$, consisting of the self-dual  $2$-forms, that is,   the $2$-forms  with eigenvalue $1$ with respect to the Hodge star  operator. The metric $g$ clearly induces a bundle isomorphism $\Lambda^2 TM \to  \Lambda^2 M$  and in the computations we will use the latter. \\
The fiber of the projection $\Lambda^2_+ M \to M$  is a real three dimensional vector space. Consider now the subbundle $\pi: Tw(M) \to M$ given by the $2$-forms  with norm $1$  with respect to  the induced metric by $g$. The structure group of $\pi$  lies in $SO(3)$, hence this fiber bundle is well defined. The fiber of this new bundle is clearly diffeomorphic to $S^2$ and $Tw(M)$  can be identified through $g$ with  the bundle of $g$-compatible almost complex structures which are also orientation preserving. More precisely, 
 for any section of  $\Lambda^2 TM$,   $\sigma= \sum_{ij} a_{ij} X_i \wedge X_j$, with $X_i$  vector fields,  we have that the induced metric $g$    is defined, by the  bilinearity, from
\[
g(X_i\wedge X_j, Y_i \wedge Y_j)=\frac{1}{2}\det(g(X_i,Y_j)).
\]
The operator $K$ which identifies the points of $Tw(M) = S(\Lambda^+)$ with almost complex structures is defined
\[
g(K_a X,Y)=2g(a, X\wedge Y)
\]
for any $a\in S(\Lambda^2_+M)$.  We also denote by $g$ the metric induced on  the dual space $\Lambda^2 M$ of $2$-forms.
The Levi-Civita connection of $g$ on  $\Lambda^2 TM \to M$, which comes from the Levi-Civita on $TM \to M$, has a curvature $R$, which is related to the Riemannian curvature of $TM$ via
\[
R(X,Y) (Z \wedge T)= R(X, Y) Z \wedge T + Z \wedge R(X, Y)  T,
\]
where $R$   denotes also the Riemannian curvature of $TM$.
We define a new operator $\rho \in  \Lambda^2 M \otimes \text{End}(\Lambda^2 TM)$ given by
\[
\rho(X \wedge Y)(Z \wedge T)=R(X,Y)(Z \wedge T).
\]
Since $M$ has real dimension four, one has a self-adjoint real endomorphism  $\mathcal{R}$  of $\Lambda^2 M$ defined by
\[
g(\mathcal{R}(X\wedge Y), Z \wedge T)=g(R(X, Y) Z, T).
\]
In the following we will let $\mathcal{R}$ act on an element of the form $v\times w$ with $v,w\in \Lambda^2_+$,  then
\[
g(\mathcal{R}(v\times w), \xi ) =   g  (\rho ( \xi  )v,  w    )
\]
for any $\xi \in\Lambda^2$.

We  recall  that a Hermitian surface  $(M, I, g,\omega)$, where $\omega(X,Y)=g(IX,Y)$,  is K\"ahler if and only if $\n \omega=0$, where $\n$ is the Levi-Civita connection   of $g$. Since such an $\omega$ is always a $2$-form in  $\Lambda^2_+ M$ with norm $1$, the K\"ahler condition provides two parallel sections of $\pi$, interchanged by a $-1$. This provides two different local trivializations $(\Lambda^2_+ M)|_U \cong  U \times S^2$, since the two sections might be viewed as maps $U  \to \{  (0,0,\pm 1)  \} \subset S^2$. This also shows that the holonomy action of $SO(3)$ on $\Lambda^2_+ M$ is reduced to $U(1)$-action and that the same holds for the structure group of the fiber bundle $\pi$. \\
This allows one to define a subbundle $S(M)$ of $\Lambda^2_+ M$ in the following way.

\begin{Defin} \label{DedS(M)}  Let  $(M, I, g, \omega)$  be a K\"ahler surface.  Fix a point $p\in M$  and let $S$ be a surface embedded in $(\Lambda^2_+ M)_p$, invariant under the action of $U(1)$.  $S(M)$ is the  subbundle  of  $\Lambda^2_+ M$, defined by $S$ and  with structure group  reduced to $U(1)$.
\end{Defin}

Notice   that  $S(M)$ does not intersect the image of the section of $\pi$ corresponding to  the K\"ahler form.  
Moreover, it is clear that a bundle map $F:S(M) \to Tw(M)$ is equivalent to a $U(1)$-equivariant map $f: S\to S^2$. \\
Given such a map, it is possible to define an almost complex structure $J$ on $S(M)$ in the following way. \\
Let $(e_1, e_2, e_3, e_4)$ be a local orthonormal frame for  a K\"ahler surface  $(M,  I,  g, \omega)$, so that $(s_1, s_2, s_3)$ is an orthonormal frame for the natural metric on $\Lambda_+^2 M $ and $s_1 = \omega$ is parallel.
Let $\epsilon$ be the Gauss map sending $p\in S$ to the normal vector to $S$ in $p$. It is clearly equivariant and we have the following identification $T_p S=\epsilon(p)^{\perp}$.
\begin{Lemma} 
For an oriented Riemannian 4-manifold $M$,  the horizontal space $\mathcal{H}$ of the induced connection on $\Lambda_+^2 M $ is tangent to $Tw(M)$.    If,  in addition,  $(M,g, I, \omega) $ is K\"ahler, the horizontal space $\mathcal{H}$ is also  tangent to $S(M) $.
\end{Lemma}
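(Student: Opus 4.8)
The plan is to exhibit both $Tw(M)$ and $S(M)$ as regular level sets of functions on the total space of $\Lambda^2_+M$ that are constant along the horizontal lifts determined by the induced connection; then $\mathcal{H}$ automatically lies in the kernel of their differentials, which is precisely the tangent space to the level set. First I would record that the connection induced on $\Lambda^2 TM$ by the Levi-Civita connection $\n$, namely $\n_X(Z\wedge T)=\n_XZ\wedge T+Z\wedge\n_XT$, is metric for the fiber metric $g$ defined above and preserves the splitting $\Lambda^2=\Lambda^2_+\oplus\Lambda^2_-$ (the Hodge star is parallel, being built from the parallel data $g$ and the orientation). Hence it restricts to a metric connection on $\Lambda^2_+M$, so that parallel transport along any curve in $M$ is a fiberwise isometry.

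For the first assertion, set $N(a)=g(a,a)$ on $\Lambda^2_+M$, so that $Tw(M)=N^{-1}(1)$. A horizontal vector $\tilde X\in\mathcal{H}_a$ may be written as $\tilde X=\frac{d}{dt}\big|_{t=0}a(t)$, where $a(t)$ is the parallel transport of $a$ along a curve $\gamma$ with $\dot\gamma(0)=\pi_*\tilde X$. Metricity gives $N(a(t))=g(a(t),a(t))\equiv g(a,a)$, whence $dN(\tilde X)=0$. Since $dN_a$ is nonzero along $Tw(M)$ (in the vertical directions it equals $2g(a,\cdot\,)$), one has $\ker dN_a=T_a\,Tw(M)$, and therefore $\mathcal{H}_a\subseteq T_a\,Tw(M)$.

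In the K\"ahler case the extra ingredient is the parallel section $s_1=\omega$, of unit norm. Define $h(a)=g(a,\omega)$. Along the same horizontal curve $a(t)$, using $\n\omega=0$ so that $\omega_{\gamma(t)}$ is itself parallel, metricity yields $h(a(t))=g(a(t),\omega_{\gamma(t)})\equiv g(a,\omega)$, so $dh$ also vanishes on $\mathcal{H}$. Because the reduced $U(1)$-action on each fiber is rotation about the $\omega$-axis, the invariant surface $S$ is a surface of revolution whose fiberwise equation can be written in terms of the two rotation invariants, the height $h$ and the radius $r=\sqrt{N-h^2}$ from the axis; thus locally $S(M)=\{\phi(h,N)=0\}$ for a suitable function $\phi$. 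Since $h$ and $N$ are both constant along horizontal lifts, so is $\phi(h,N)$, and therefore $\mathcal{H}_a\subseteq T_a\,S(M)$.

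The first statement is essentially forced by metricity; the only delicate point lies in the second, where I must verify that $S(M)$ is (locally) the zero locus of a function of the horizontal invariants $h$ and $N$. This is exactly where the hypotheses enter: the parallelism of $\omega$ makes $h$ globally defined and horizontal-constant, while the $U(1)$-invariance of $S$ guarantees that its defining equation involves only $h$ and $r^2=N-h^2$. One should restrict to the complement of the $U(1)$-fixed locus (the $\omega$-axis) to keep these level sets regular, but since tangency is a pointwise condition this causes no real difficulty.
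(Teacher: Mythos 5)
Your proof is correct, and it takes a genuinely different route from the paper's. The paper argues at the level of tangent vectors: working in a local orthonormal frame $(s_1=\omega,s_2,s_3)$ of $\Lambda^2_+M$, it computes that the vertical component of a horizontal lift at $a=a^is_i$ is $-a^i\nabla_X s_i$, which is orthogonal to $a$ by metricity of the connection (hence tangent to the sphere bundle $Tw(M)$), and which in the K\"ahler case, where $\nabla s_1=0$, $\nabla_X s_2=\beta(X)s_3$, $\nabla_X s_3=-\beta(X)s_2$, is proportional to $a^2s_3-a^3s_2$: an infinitesimal rotation about the $\omega$-axis, manifestly tangent to any surface of revolution. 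You argue instead at the level of defining functions, exhibiting $Tw(M)$ and (locally) $S(M)$ as regular level sets of the horizontal-constant invariants $N=g(a,a)$ and $\phi(h,N)$ with $h=g(a,\omega)$, so that $\mathcal{H}\subseteq\ker dN\cap\ker dh$ forces the tangency. The two mechanisms are dual to one another --- infinitesimal rotations are precisely the vertical vectors annihilating all $U(1)$-invariant functions --- and both rest on the same two inputs, metricity of the induced connection and $\nabla\omega=0$. Your formulation is coordinate-free and makes those inputs conspicuous; the paper's frame computation has the advantage of producing the explicit connection form $\beta$ and the formula for the vertical drift of horizontal lifts, which are reused in the integrability (Nijenhuis tensor) computations that follow.

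The one point needing repair is your treatment of the axis. If $S$ meets the fixed axis (as happens for $S=S^2$, i.e.\ $S(M)=Tw(M)$), those axis points belong to $S(M)$, so excising the fixed locus does not dispose of them: tangency must still be proved there, and a $\phi$ built from $r=\sqrt{N-h^2}$ need not be smooth or regular at $r=0$. Two quick fixes: (i) the inclusion $\mathcal{H}\subseteq TS(M)$ of continuous subbundles holds on the dense open complement of the (fiberwise isolated) axis points, hence everywhere by continuity; or (ii) near an axis point a smooth $U(1)$-invariant surface is a graph $h=G(N-h^2)$ with $G$ smooth (a smooth even function of $r$ is a smooth function of $r^2$), which is again a regular level set of a function of $(h,N)$. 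The paper's argument needs no such care, since the rotation vector $\beta(X)(a^2s_3-a^3s_2)$ simply vanishes at axis points and is trivially tangent there.
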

\begin{proof}
	The connection on $\Lambda_+^2 M $ is defined by $$\n(v\wedge w)= \n(v)\wedge w+ v \wedge \n (w).$$ For a general vector bundle $\pi: E \to M$ with a connection $\n$, the horizontal space $H_e \subset T_eE $ is spanned by $s_*(T_{\pi(e)} M) $ where $s$ is a local section satisfying $s(\pi(e))=e$ and $(\n s)|_{\pi(e)} =0$. \\
	 Fix $p\in Tw(M) $, and consider an orthonormal frame $(s_1, s_2, s_3)$  for the natural metric on $\Lambda_+^2 M $,  then $s=a^i(x) s_i (x)$  satisfies $(\n_X s)|_{\pi(e)} =0$ if and only if $d a^i (X)s_i=- a^i (\n_X s_i) $. If this is the case, 
	\[
	s_*(X)=X^k \de_k +(\de_j  a^i) X^j s_i = X^k \de_k - a^i (\n_X s_i).
	\]
	Then $s_* (X) $ is tangent to $S^2$, as $g(\n_X s_i, s_i)=  \sum_{i=1}^4 \omega ^i_i (X)=0   $, with $(\omega ^i_j)$ the connection forms for $\n$ on $M$ with respect to $(s_i)$. \\
	Assume now  that  $(M, I, g, \omega)$ is K\"ahler and that $s_1 = \omega$,  so that $\n s_1 =0 $. By  the compatibility of the Levi-Civita connection  $\n$ with the metric, one finds that 
	\[
	\n_X s_2 =\beta(X) s_3 \qquad \n_X s_3 =-\beta(X) s_2,  
	\]
	with $\beta \in \Lambda^1(M)$. Fix a point $p$ in any rotational surface $S$, then 
	\[
	\text{proj}_{II}\big( s_*(X)\big)=-a^3(x) \beta(X) s_2 + a^2(x) \beta(X) s_3
	\]
	which is clearly still tangent to $S$ at $p$ ($\text{proj}_{II}$ denotes the projection on the fiber).
	\end{proof}
 This allows one to define an almost complex structure $J$  on $S(M)$ as follows. Fix $p\in S(M) $ and $\xi \in T_p S(M)$. The restriction of $\n$ to $S(M)$ induces a splitting $\xi =\xi^h +\xi^v $ and we define
 	\begin{equation} \label{formula(1)}
 	J_p(\xi^h)=(K_{F(p)} (\pi_*\xi) )^h_p  \qquad  J_p(\xi^v)=\epsilon(p) \times \xi^v
 	\end{equation}
 	where $K_{F(p)}$ is the $g$-compatible  almost complex structure  of $T _{\pi(p)}M $ corresponding to $F(p)$, $(\cdot)^h$ denotes  the horizontal part, $(\cdot)^h_p$ denotes  the horizontal lift to $p$, and $\times$ is the standard cross product in $\R^3$. \\
  Let $h$ be the Riemannian metric on $S(M)$ given by $$h=\pi^* g + g^v, $$ with $g^v$ the restriction of $g$ on $\Lambda^2_+ M $. Then we have
  \begin{Lemma}
      The almost complex structure $J$ on $S(M)$ is  compatible with the Riemannian metric $h$.
  \end{Lemma}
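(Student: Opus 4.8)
The plan is to exploit the fact that both $J$ and $h$ respect the orthogonal decomposition $T_p S(M) = \mathcal{H}_p \oplus \mathcal{V}_p$ into horizontal and vertical parts, so that compatibility reduces to two independent and elementary verifications. First I would observe that $J$ preserves this splitting: by the definition \eqref{formula(1)}, $J_p(\xi^h)$ is a horizontal lift and hence horizontal, while $J_p(\xi^v)=\epsilon(p)\times\xi^v$ lies in $\epsilon(p)^{\perp}=T_pS$ because the cross product is orthogonal to $\epsilon(p)$, so $J_p(\xi^v)$ is again vertical. On the other hand, since $h=\pi^*g+g^v$ with $\pi^*g$ supported on $\mathcal{H}$ and $g^v$ on $\mathcal{V}$, the two summands are $h$-orthogonal. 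Consequently the mixed terms $h(J\xi^h,J\eta^v)$ and $h(\xi^h,\eta^v)$ vanish identically, and it remains to check $h(J\xi,J\eta)=h(\xi,\eta)$ separately on $\mathcal{H}_p$ and on $\mathcal{V}_p$.

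On the horizontal space I would use that $\pi_*$ and the horizontal lift $(\cdot)^h_p$ are mutually inverse isometries between $\mathcal{H}_p$ and $T_{\pi(p)}M$, together with the $g$-compatibility of $K_{F(p)}$. Since $F(p)\in Tw(M)$ is a self-dual $2$-form of unit norm, the associated almost complex structure $K_{F(p)}$ is orthogonal, that is $g(K_{F(p)}X,K_{F(p)}Y)=g(X,Y)$. Then, using $\pi_*\circ(\cdot)^h_p=\mathrm{id}$,
\[
h(J\xi^h,J\eta^h)=g\big(K_{F(p)}(\pi_*\xi^h),K_{F(p)}(\pi_*\eta^h)\big)=g(\pi_*\xi^h,\pi_*\eta^h)=h(\xi^h,\eta^h).
\]

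On the vertical space, the identification of $(\Lambda^2_+M)_p$ with $\R^3$ through the orthonormal frame $(s_1,s_2,s_3)$ makes $g^v$ the standard Euclidean inner product, with respect to which $\times$ is the usual cross product and $\epsilon(p)$ is a unit vector, being the value of the Gauss map. For $\xi^v,\eta^v\in T_pS=\epsilon(p)^{\perp}$ the Lagrange identity gives
\[
g^v\big(\epsilon(p)\times\xi^v,\epsilon(p)\times\eta^v\big)=|\epsilon(p)|^2\,g^v(\xi^v,\eta^v)-g^v(\epsilon(p),\xi^v)\,g^v(\epsilon(p),\eta^v)=g^v(\xi^v,\eta^v),
\]
since the last two factors vanish and $|\epsilon(p)|=1$. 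This is precisely $h(J\xi^v,J\eta^v)=h(\xi^v,\eta^v)$, which completes the verification.

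I expect no serious obstacle: the argument is entirely structural, resting on the observation that $J$ and $h$ are each block-diagonal for the horizontal–vertical splitting. The one point to state with care is that the natural fiber metric $g^v$ coincides with the Euclidean metric in which both $\times$ and the unit normal $\epsilon$ are taken, so that the cross-product identity applies; this is exactly what working in the $g$-orthonormal frame $(s_1,s_2,s_3)$ guarantees. Equivalently, one may note that $\xi^v\mapsto\epsilon(p)\times\xi^v$ is a rotation by a right angle in the tangent plane $T_pS$, hence an isometry of $(\mathcal{V}_p,g^v)$, which makes the vertical compatibility immediate.
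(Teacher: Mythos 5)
Your proof is correct and follows essentially the same route as the paper, whose one-line argument ("follows from the fact that the horizontal and vertical spaces are orthogonal and by \eqref{formula(1)}") you have simply expanded: block-diagonality of both $J$ and $h$ with respect to $\mathcal{H}\oplus\mathcal{V}$, orthogonality of $K_{F(p)}$ on the horizontal part, and the cross-product-with-unit-normal rotation on the vertical part. No gaps; your write-up just makes explicit what the paper leaves to the reader.
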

  \begin{proof}
      Follows from the fact that the horizontal and vertical spaces are orthogonal and by   \eqref{formula(1)}.
  \end{proof}

  \smallskip

To prove the integrability of $J$, we use the fact that the Nijenhuis tensor $N_J$ can be written as
$$
\begin{array}{lcl}
h( N_J(X,Y),Z ) &=  & (D_X \Omega) (JY,Z) -  (D_{JY}\Omega) (X, Z) \\[3pt]
&& - (D_{Y}\Omega) (JX, Z) + (D_{JX}\Omega) (Y, Z),
\end{array}
$$
where $\Omega(\cdot ,\cdot)=h( \cdot ,J \cdot)$ and $D$ is the Levi-Civita connection of $h$ on $S(M)$. \\

\begin{Lemma}
    Let $(M,g)$ be an oriented Riemmanian 4-manifold and let  $p\in S(M)$, $V$  be vertical in $p$, and $X, Y\in T_{\pi(p)}M$. The following formulas hold.
 \begin{enumerate}
 	\item $g(p\times V, K_{p} X \wedge  Y         ) = g(p\times V, X \wedge K_{p} Y         ) = g( V, X \wedge  Y         ),   $ 
 	\item $\mathcal{V}(D_{X^h} Y^h)_{p}= \frac{1}{2} \rho_{\pi(p)}(X \wedge Y) p, $
 	\item $(D_{V} X^h)_{p}= \frac{1}{2} (\rho_{\pi(p)}(p \times V)X)^h_p, $
 	\item $g(\epsilon(p) \times \rho(X \wedge Y) p, U)=-g(\mathcal{R}(p \times \epsilon(p) \times U), X\wedge Y).    $
 \end{enumerate}
\end{Lemma}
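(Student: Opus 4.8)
The plan is to split the four identities into two groups that call for different machinery: the algebraic ones, (1) and (4), which are pointwise statements in the fibre $(\Lambda^2_+M)_{\pi(p)}\cong\R^3$, and the connection-theoretic ones, (2) and (3), which are the O'Neill formulas for the Riemannian submersion $\pi\colon(S(M),h)\to(M,g)$ determined by $h=\pi^{*}g+g^{v}$. For (1) I would reduce everything to the Clifford relation for the operators $K_a$. From $g(K_aZ,T)=2g(a,Z\wedge Y)$ one verifies on an orthonormal frame $(s_1,s_2,s_3)$ of $\Lambda^2_+$ that $K_aK_b=-g(a,b)\,\mathrm{Id}+K_{a\times b}$, both sides being bilinear in $a,b$. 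Writing $g(p\times V,K_pX\wedge Y)=\tfrac12 g(K_{p\times V}K_pX,Y)$ and applying this relation together with the triple product $(p\times V)\times p=|p|^2V-g(p,V)p$ collapses the expression to $\tfrac12 g(K_VX,Y)=g(V,X\wedge Y)$, once $p$ is a unit vector orthogonal to $V$ (the twistor normalization of the fibre); the second equality, with $X\wedge K_pY$, follows identically after transferring $K_p$ across $g$ using its skew-symmetry.

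For (2) and (3) I would exploit that $\pi$ is a Riemannian submersion whose horizontal distribution $\mathcal H$ is, by the first Lemma of this section, tangent to $S(M)$. O'Neill's identity then gives $\mathcal V(D_{X^h}Y^h)=\tfrac12\mathcal V[X^h,Y^h]$, and the structure equation for $\nabla$ identifies the vertical part of the bracket of horizontal lifts with the curvature acting on the fibre point, $\mathcal V[X^h,Y^h]_p=\rho_{\pi(p)}(X\wedge Y)p$ up to sign, which is exactly (2). For (3) one observes that $[V,X^h]$ is vertical, since $X^h$ is basic and $V$ is $\pi$-related to $0$, so $\mathcal H D_VX^h=\mathcal H D_{X^h}V=A_{X^h}V$; the skew-symmetry of the O'Neill tensor $A_{X^h}$ reduces the horizontal part of $D_VX^h$, tested against an arbitrary $Y^h$, to $\tfrac12 g(\rho(X\wedge Y)p,V)$ up to sign, and the dictionary $g(\rho(X\wedge Y)p,V)=g(\mathcal R(p\times V),X\wedge Y)=g(\rho(p\times V)X,Y)$ rewrites this as $\tfrac12 g\big((\rho(p\times V)X)^h,Y^h\big)$.

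Identity (4) is fibrewise again. Cyclic invariance of the scalar triple product turns $g(\epsilon(p)\times\rho(X\wedge Y)p,U)$ into $g(\rho(X\wedge Y)p,U\times\epsilon(p))$, and the same dictionary $g(\rho(\xi)v,w)=g(\mathcal R(v\times w),\xi)$, now with $v=p$, $w=U\times\epsilon(p)$ and $\xi=X\wedge Y$, gives $g(\mathcal{R}(p\times(U\times\epsilon(p))),X\wedge Y)$; the antisymmetry $U\times\epsilon(p)=-\epsilon(p)\times U$ then produces the stated minus sign and the grouping $p\times(\epsilon(p)\times U)$.

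The main obstacle I expect is sign bookkeeping rather than any conceptual difficulty. One must fix a single curvature convention and carry it consistently through the structure equation $\mathcal V[X^h,Y^h]=\pm\rho\,p$, the O'Neill factor $\tfrac12$, and the skew-symmetry transpose invoked for (3), so that the relative signs of (2) and (3) emerge as written; the pairing against $Y^h$ and the self-adjointness of $\mathcal R$ are precisely the places where a sign is easy to mislay. A secondary point is to keep track of where the unit-sphere normalization ($|p|=1$, $V\perp p$) is actually used in (1), and to confirm that $\mathcal H\subset TS(M)$, which is exactly the step where the Kähler hypothesis enters.
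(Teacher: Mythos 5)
The paper itself states this Lemma without proof --- it is immediately followed by the Nijenhuis-tensor computation that uses it --- so there is no argument of the authors to compare yours against; what you are doing is reconstructing standard twistor-geometry facts, and your division of labor is the natural one. Parts (1) and (4) are correct as you outline them: the quaternionic relation $K_aK_b=-g(a,b)\,\mathrm{Id}+K_{a\times b}$, the scalar triple-product identities, and the dictionary $g(\rho(\xi)v,w)=g(\mathcal{R}(v\times w),\xi)$ do collapse both identities, with the caveat (which you flag) that (1) uses $|p|=1$ and $V\perp p$, so it is really a statement about $K_{F(p)}$ on $Tw(M)$ rather than about a point of a general rotation surface $S$, where $V\in\epsilon(p)^{\perp}$ need not be orthogonal to $p$.

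Two genuine gaps remain in (2)--(3). First, (3) asserts that $D_VX^h$ \emph{is} horizontal, but you only compute its horizontal component $\mathcal{H}(D_VX^h)=A_{X^h}V$; you must also show $\mathcal{V}(D_VX^h)=0$, i.e.\ that the fibres are totally geodesic. This is true --- by the Koszul formula and the skew-symmetry $\omega_i^j=-\omega_j^i$ of the connection forms of a metric connection, together with the fact that parallel transport preserves the fibre $S_x$ (the holonomy lies in $U(1)$ and $S$ is rotation-invariant) --- but it is not automatic and appears nowhere in your outline. Second, the sign question you defer is not mere bookkeeping that will ``emerge as written'': your own chain forces (2) and (3) to carry \emph{opposite} relative signs. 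Indeed $h(D_VX^h,Y^h)=h(A_{X^h}V,Y^h)=-h(V,A_{X^h}Y^h)$ by skew-symmetry of $A_{X^h}$, while the dictionary gives $g(\rho(X\wedge Y)p,V)=g(\mathcal{R}(p\times V),X\wedge Y)=g(\rho(p\times V)X,Y)$ with no sign change (one can check this identity holds exactly, with the paper's normalization $g(X\wedge Y,Z\wedge T)=\tfrac12\det(g(X_i,Y_j))$ and the cross product oriented so that $K_{s_1}K_{s_2}=K_{s_3}$). Hence if (2) holds with $+\tfrac12$ then (3) holds with $-\tfrac12$, and vice versa: with the convention $R(X,Y)=[\nabla_X,\nabla_Y]-\nabla_{[X,Y]}$ one gets $\mathcal{V}[X^h,Y^h]_p=-R(X,Y)p$, hence $-\tfrac12$ in (2) and $+\tfrac12$ in (3), and the opposite convention swaps them. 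So a consistent computation will not reproduce both printed signs; it will reveal that one of them is a typo (harmless for the sequel, where only the structure of these terms enters the Nijenhuis computation). Your write-up should establish this explicitly rather than promise that careful bookkeeping makes both signs come out as printed.
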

Then the computations of the Nijenhuis tensor of  the complex structure $J$ on $S(M)$ are the following.
 \begin{Lemma}
 Let $X,Z$ be vector fields on $M$ and $U$ a vertical vector field on $S(M)$. Then $$ h(N(X^h,U),Z^h) = 2g(  J f_*(U)      - f_*(JU), X \wedge Z)$$ and $h(N(X^h,U),Z^h)=0$  if and only if $f: S\to S^2$  is holomorphic. 
 \end{Lemma}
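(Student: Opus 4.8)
The plan is to substitute $X=X^h$, $Y=U$ (vertical) and $Z=Z^h$ directly into the displayed expression
$$h(N_J(X,Y),Z)=(D_X\Omega)(JY,Z)-(D_{JY}\Omega)(X,Z)-(D_Y\Omega)(JX,Z)+(D_{JX}\Omega)(Y,Z),$$
and expand each covariant derivative by $(D_A\Omega)(B,C)=A(\Omega(B,C))-\Omega(D_AB,C)-\Omega(B,D_AC)$. The first simplification I would exploit is that $\Omega$ pairs a vertical and a horizontal vector to zero: since $J$ preserves the splitting $T S(M)=\mathcal{H}\oplus\mathcal{V}$ and $h$ is block diagonal, $\Omega(V,W^h)=h(V,JW^h)=0$ for $V$ vertical. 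As $JU=\epsilon(p)\times U$ and $U$ are both vertical, the pairings $\Omega(JU,Z^h)$ and $\Omega(U,Z^h)$ vanish identically, so the zeroth-order (differentiation) pieces of three of the four terms disappear.

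The only surviving zeroth-order piece comes from $(D_{JU}\Omega)(X^h,Z^h)$. Here $\Omega(X^h,Z^h)=h(X^h,JZ^h)=g(X,K_{F(p)}Z)=2g(F(p),Z\wedge X)$, and since $JU$ is vertical it differentiates only the fiber-dependence through the point $F(p)$; using linearity of $a\mapsto K_a$ this produces $g(X,K_{f_*(JU)}Z)$, a term proportional to $g(f_*(JU),X\wedge Z)$. The second $f_*$-contribution arises from the connection piece $\Omega(D_U(JX^h),Z^h)$ inside $(D_U\Omega)(JX^h,Z^h)$: since $JX^h=(K_{F(p)}X)^h$ is the horizontal lift of the fiber-dependent field $K_{F(p)}X$, differentiating in the vertical direction $U$ picks up the term $(K_{f_*(U)}X)^h$. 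Pairing with $JZ^h=(K_{F(p)}Z)^h$ gives $g(K_{f_*(U)}X,K_{F(p)}Z)=2g(f_*(U),X\wedge K_{F(p)}Z)$; writing $f_*(U)=F(p)\times V'$ (legitimate because $f_*(U)\perp F(p)$) and applying formula (1) of the preceding Lemma collapses this to a term proportional to $g(F(p)\times f_*(U),X\wedge Z)=g(Jf_*(U),X\wedge Z)$. Collecting the two contributions with the correct signs yields $2g(Jf_*(U)-f_*(JU),X\wedge Z)$.

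All remaining pieces contain no derivative of $F$: they are obtained by inserting formulas (2)--(4) of the preceding Lemma into the connection parts $\Omega(D_AB,C)$, producing expressions built from $\rho$, $\mathcal{R}$ and cross products. The main obstacle, and the part demanding the most careful bookkeeping, is to show that these curvature terms cancel. I would normalize them to a common shape by replacing every occurrence of the vertical part $\tfrac12\rho(X\wedge Z)p$ and every $g(\epsilon(p)\times\rho(\cdot)p,\cdot)$ using the identity $g(\mathcal{R}(v\times w),\xi)=g(\rho(\xi)v,w)$ and formula (4), so that each term becomes a value of the self-adjoint operator $\mathcal{R}$ on a decomposable element; then the antisymmetry in $(X,Z)$ inherent in the Nijenhuis tensor together with the skew-symmetry of the curvature operators lets one pair them off. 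A useful consistency check is the case $f=\mathrm{id}$ (so $S=S^2$ and $\epsilon(p)=F(p)=p$): there the right-hand side vanishes because $Jf_*(U)=p\times U=f_*(JU)$, which exhibits the required cancellation in the simplest configuration.

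Finally, the stated equivalence is immediate from the formula. By definition the complex structures on the rotational surface $S$ and on the twistor fiber $S^2$ act on tangent vectors as $U\mapsto\epsilon(p)\times U$ and $w\mapsto F(p)\times w$, so $f$ is holomorphic precisely when $f_*(\epsilon(p)\times U)=F(p)\times f_*(U)$, i.e.\ $f_*(JU)=Jf_*(U)$. Since the bivectors $X\wedge Z$ span $\Lambda^2$ and $g$ restricts nondegenerately to $\Lambda^2_+$, the vanishing of $h(N(X^h,U),Z^h)$ for all $X,Z$ forces $Jf_*(U)-f_*(JU)=0$ for every vertical $U$, which is exactly the holomorphicity of $f$.
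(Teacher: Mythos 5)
Your proposal is correct and follows essentially the same route as the paper: the paper likewise expands the four-term $D\Omega$ expression using identities (1)--(4) of the preceding Lemma, finds that the only $f_*$-contributions are exactly the two you isolate (the vertical derivative of $\Omega(X^h,Z^h)$ in the direction $JU$, and the connection piece $\Omega(D_U(JX^h),Z^h)$ coming from the fiber-dependence of $JX^h=(K_{F(p)}X)^h$), converts $g(f_*(U),J_{f(p)}X\wedge Z)$ into a multiple of $g(Jf_*(U),X\wedge Z)$ via identity (1), and reads off the equivalence with holomorphicity from nondegeneracy of $g$ on $\Lambda^2_+$. One caution on the step you left as a sketch: in the paper's computation the curvature terms cancel termwise --- each scalar $g(\mathcal{R}(p\times U),\cdot)$ and $g(\mathcal{R}(p\times JU),\cdot)$ simply appears twice with opposite signs once identity (1) is used to move $J$ between slots --- so no appeal to ``antisymmetry in $(X,Z)$'' (which the mixed component $h(N(X^h,U),Z^h)$ does not manifestly possess) or to skew-symmetry of the curvature operators is needed or available; the cancellation you assert is real, and notably holds with no curvature hypothesis, scalar-flatness entering only in the horizontal component $h(N(X^h,Y^h),U)$ treated afterwards in the paper.
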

 
\begin{proof}

We start with:
 	\[
 	\begin{split} 
 	 h(N(X^h,U),Z^h)=&(D_{X^h} \Omega)(JU, Z^h)-(D_{JU} \Omega)(X^h, Z^h) -(D_{U} \Omega)(JX^h, Z^h) \\
 	 & + (D_{JX^h} \Omega)(U, Z^h) \\
 	  = &\frac{1}{2}\bigg(  g(\mathcal{R} (p\times U),X\wedge Z)-   g(\mathcal{R}(p \times J U), X \wedge J_{f(p)}Z) \bigg)    \\ 	
 	  &+  \frac{1}{2}g( \mathcal{R}(p \times J U), X \wedge J_{f(p)}Z + J_{f(p)}X \wedge Z)-2g(f_*(JU), X \wedge Z) \\
 	  &+  \frac{1}{2}g( \mathcal{R}(p \times  U), J_{f(p)} X \wedge J_{f(p)}Z -X \wedge Z)-2g(f_*(U), J_{f(p)}X \wedge Z) \\
 	  &+\frac{1}{2}\bigg(  -g(\mathcal{R} (p\times JU), J_{f(p)}X \wedge Z   ) -    g(\mathcal{R}(p \times U), J_{f(p)}X \wedge J_{f(p)}Z) \bigg)    \\ 	
 	  = & -2g(f_*(JU), X \wedge Z) -2g(f_*(U), J_{f(p)}X \wedge Z) \\
 	  = &  -2g(f_*(JU), X \wedge Z) - 2g(-f(p) \times f_*(U), X \wedge Z)  	\\
    = & 2g(  J f_*(U)      - f_*(JU), X \wedge Z).
  \end{split}
	\]
	Then note that the horizontal part of $N(X^h,Z^h)$ vanishes, since 
\begin{equation}\label{eq2}
(D_{X^h} \Omega)(Y^h, Z^h)= 2g(\mathcal{V} f_*(X^h), Y \wedge Z).
\end{equation}
Some easy computations show that 
$$
\begin{array}{lcc}
 h(N(X^h,Y^h),U) &= &g( p \times U, \mathcal{R}(JX \wedge JY- X \wedge Y))\\[3pt]
 &&+ g( p \times JU, \mathcal{R}(X \wedge JY+J X \wedge Y)).
\end{array}
$$
Since $\n s_1=0$, we have  $\mathcal{R}(s_2)=\mathcal{R}(s_3)=0$, by  the previous identities for $\n s_2$ and $\n s_3$ and  the Bianchi identities. Moreover, $g(\mathcal{R}(s_1), s_1)=\frac{s}{3}  $, with $s$ the scalar curvature of $M$. Thus, being scalar flat is clearly sufficient for the integrability of $J$. It is also necessary: substitute ciclically $s_i$, $i=1,2,3$,  into the formula above. \\ \\
\end{proof}
\begin{rmk} 
Note that,  since  $f: S\to S^2$  must be $U(1)$-equivariant and $S$ is a surface of rotation, if $S$ is compact then it has genus $0$. This is the case considered in \cite{ADM}.
\end{rmk}

As a consequence we have:
\begin{Teorema} \label{Th2.5}
Let $(M, g)$ be a K\"ahler scalar-flat  $4$-manifold. The almost complex structure $J$ on $S(M)$ is integrable if and only if the map $f$ is holomorphic. In this case $F$ is a biholomorphism from $S(M)$ to a domain in $Tw(M)$.
\end{Teorema}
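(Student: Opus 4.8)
The plan is to establish the equivalence by testing the Nijenhuis tensor $N$ of $J$ on the three natural types of tangent pairs determined by the $J$-invariant splitting, and then to promote the holomorphicity of $f$ to the biholomorphism statement.

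First I would note that formula \eqref{formula(1)} shows $J$ preserves the decomposition $TS(M)=\mathcal{H}\oplus\mathcal{V}$. Since $N$ is $\R$-bilinear and skew, $N\equiv 0$ is equivalent to its vanishing on pairs $(X^h,Y^h)$, $(X^h,U)$ and $(U,W)$, with $X^h,Y^h$ horizontal lifts and $U,W$ vertical. Two of these are immediate. For $(U,W)$ both vertical: the fibers of $\pi$ are integrable and $J$-invariant, so all four brackets in $N(U,W)$ are tangent to the fiber; thus $N(U,W)$ is vertical and agrees with the Nijenhuis tensor of the induced almost complex structure on the fiber $S$, which vanishes because $\dim_\R S=2$. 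For $(X^h,Y^h)$: the preceding Lemma shows its horizontal part is zero and its vertical part is controlled by $\mathcal{R}$ on $\Lambda^2_+M$; since $\n s_1=0$ one has $\mathcal{R}(s_2)=\mathcal{R}(s_3)=0$ and $g(\mathcal{R}(s_1),s_1)=s/3$, so scalar-flatness forces this part to vanish as well. Hence $N(X^h,Y^h)=0$.

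The decisive type is the mixed one $(X^h,U)$. Its horizontal component is precisely the content of the preceding Lemma, $h(N(X^h,U),Z^h)=2g(Jf_*(U)-f_*(JU),X\wedge Z)$, which vanishes for all $X,Z,U$ exactly when $Jf_*=f_*J$, i.e. when $f$ is holomorphic; here one uses that the bivectors $X\wedge Z$ span $\Lambda^2$ and that $Jf_*(U)-f_*(JU)$ lies in the fiber direction inside $\Lambda^2_+M$, so the pairing detects its vanishing. This already gives the direction ``integrable $\Rightarrow$ $f$ holomorphic''. What remains, and is the main obstacle, is the vertical component $h(N(X^h,U),W)$, which the preceding Lemma does not address: I would compute it from the $\Omega$-expression for $N$ using the connection identities of the earlier Lemma, observing that $\Omega$ vanishes on mixed horizontal--vertical pairs so that only the correction terms built from $D_{X^h}(\cdot)$ and $D_U(\cdot)$ survive, and then verify that this component is again a multiple of $Jf_*-f_*J$ (hence vanishes once $f$ is holomorphic and $\n s_1=0$). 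Assembling the three types yields $N\equiv 0$ if and only if $f$ is holomorphic.

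For the last assertion, assume $J$ is integrable, so $f$ is holomorphic. A scalar-flat Kähler surface is anti-self-dual, since for a Kähler surface $W_+$ is determined by the scalar curvature and vanishes precisely when $s=0$; hence $Tw(M)$ carries its integrable twistor complex structure. The bundle map $F\colon S(M)\to Tw(M)$ covers $\mathrm{id}_M$ and restricts to $f$ on fibers, and by the definition of the two almost complex structures it satisfies $F_*\circ J=J\circ F_*$ on both $\mathcal{H}$ and $\mathcal{V}$, so $F$ is holomorphic. Finally, a non-constant $U(1)$-equivariant holomorphic map $f$ from the rotational Riemann surface $S$ to $S^2$ is an immersion---near a rotation-fixed point equivariance forces $f$ to be linear to leading order, hence nonsingular there, and elsewhere $U(1)$ acts freely---and it is injective onto a $U(1)$-invariant open set; therefore $F$ is a biholomorphism of $S(M)$ onto the corresponding $U(1)$-invariant domain of $Tw(M)$.
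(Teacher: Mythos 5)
Your componentwise reduction is, in substance, the paper's own proof: the two components you settle (horizontal--horizontal, and the horizontal part of the mixed term) are exactly the computations in the Nijenhuis-tensor Lemma preceding Theorem~\ref{Th2.5}, after which the paper states the theorem as a consequence; your vertical--vertical case (automatic because the fibres are $J$-invariant of real dimension two) is correct and is left implicit in the paper. The one genuine gap in what you wrote is the step you yourself flag: the vertical component $h(N(X^h,U),W)$ is disposed of only by a promissory note (``I would compute it \dots and then verify that this component is again a multiple of $Jf_*-f_*J$''). That computation is never performed, and the asserted form of the answer is not obviously correct; as written, the direction ``$f$ holomorphic $\Rightarrow J$ integrable'' is therefore incomplete. (To be fair, the paper's Lemma does not treat this component either.)

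The good news is that your final paragraph already contains a computation-free way to close this gap, provided you reorganize the logic. The Lemma gives ``$J$ integrable $\Rightarrow f$ holomorphic''. For the converse, argue exactly as you do for the biholomorphism claim: scalar-flat K\"ahler implies $W_+=0$, so $(Tw(M),J_{Tw})$ is an honest complex manifold by Atiyah--Hitchin--Singer; if $f$ is holomorphic, then $F$ intertwines $J$ and $J_{Tw}$ (on $\mathcal{H}$ this uses \eqref{formula(1)} \emph{together with} the fact that $F$, being an associated-bundle map for the same principal $U(1)$-connection, carries horizontal spaces to horizontal spaces --- a point worth stating explicitly), and $f$ is an injective immersion, so $F$ is a diffeomorphism onto a $U(1)$-invariant domain satisfying $F_*\circ J=J_{Tw}\circ F_*$. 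Then $J=F^*J_{Tw}$, hence $N_J\equiv 0$ with no further Nijenhuis computation, and the biholomorphism statement comes for free. Two details should still be made honest: (i) away from rotation-fixed points, your immersion argument needs the observation that a non-constant holomorphic map cannot collapse a whole $U(1)$-orbit to a pole of $S^2$, since fibres of non-constant holomorphic maps between Riemann surfaces are discrete; (ii) injectivity of $f$, which you only assert (as does the paper in Remark~\ref{Rmk1}), does follow, because holomorphicity forces the profile $\phi$ along a meridian to solve a first-order ODE with nowhere-vanishing derivative of constant sign --- this is the content of the Corollary following Theorem~\ref{Th2.5} --- so $\phi$ is strictly monotone.
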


\subsection{Necessary and sufficient conditions for the existence of $f$}

\smallskip

By the equivariance condition, $f:S \to S^2$ is determined by its restriction to a meridian of $S$, say $\{x=0\} \cap S $. Recall that, for a smooth manifold $M$ with $\dim_{\R}(M)=2$,  the choice of a complex structure on $M$ is equivalent to the choice of a conformal class. A smooth, non constant map between such manifolds, $(M,I)\to (N,J)$, is holomorphic if and only if  it is conformal and preserves the orientation. \\
Let
\[
(\rho_S(z)\cos(\theta), \rho_S(z)\sin(\theta), z)  \qquad 
(\rho_{S^2}(\zeta)\cos(\alpha), \rho_{S^2}(\zeta)\sin(\alpha), \zeta)
\]
be two parametrizations for respectively $S$ and $S^2$. Here $\rho_S$ is the map which associates to a point $p\in S$ the distance $d(p, \{x=y=0\})$. For example $\rho_{S^2}(\zeta)=(1-\zeta^2)^{1/2}$ when it is defined. Clearly $\rho_S$ determines $S$. \\
Each couple $(\frac{\de}{\de \theta}, \frac{\de}{\de z}) $ and $(\frac{\de}{\de \alpha}, \frac{\de}{\de \zeta}) $ is orthogonal and the equivariance of $f$ 
implies that in these coordinates
\[
f(\theta, z)=(\theta, \phi(z)).
\]
In particular, the differential of $f$ is diagonal, which imposes  a constraint on $\phi'(z)$ for 
$f$  to be holomorphic. From  Theorem \ref{Th2.5}, we obtain: \\

\begin{Corollary}
In the above notation, $f$ is holomorphic if and only if 
\[
\phi(z) = \pm\sqrt{1 -  e^c  ( \rho_S (z))^{- 2}}
\]
for some constant $c$.
\end{Corollary}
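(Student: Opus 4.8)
The plan is to reduce the statement to a single first-order ODE for the meridian profile $\phi$ and then integrate it. By the $U(1)$-equivariance already used above, $f$ is completely determined by its restriction to the meridian $\{x=0\}\cap S$, which in the chosen coordinates is the map $(\theta,z)\mapsto(\theta,\phi(z))$; hence it suffices to decide when this meridian map is holomorphic. Since $S$ and $S^2$ are real surfaces, I would use the characterization recalled above: a non-constant smooth map between them is holomorphic if and only if it is conformal and orientation-preserving. Because the coordinate frames $(\frac{\de}{\de\theta},\frac{\de}{\de z})$ and $(\frac{\de}{\de\alpha},\frac{\de}{\de\zeta})$ are orthogonal and $f(\theta,z)=(\theta,\phi(z))$, the differential $df$ is diagonal in these frames, sending $\frac{\de}{\de\theta}\mapsto\frac{\de}{\de\alpha}$ and $\frac{\de}{\de z}\mapsto\phi'(z)\frac{\de}{\de\zeta}$. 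Thus conformality is equivalent to the single scalar condition that $df$ stretch the parallel direction and the meridian direction by the same factor.

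Next I would compute the two conformal factors explicitly. Along the parallels the length of $\frac{\de}{\de\theta}$ on $S$ is the distance to the axis $\rho_S(z)$, while the length of its image $\frac{\de}{\de\alpha}$ on $S^2$ is $\rho_{S^2}(\phi)=\sqrt{1-\phi^2}$, so the parallel stretch is $\rho_{S^2}(\phi)/\rho_S$. Along the meridian the corresponding stretch is governed by $\phi'$ together with the meridian length elements of the two profiles, which are again controlled by $\rho_S$ and $\rho_{S^2}$ through $\rho_{S^2}(\zeta)=\sqrt{1-\zeta^2}$. Equating the two stretch factors and using orientation-preservation to fix the correct branch produces the separable first-order ODE
\[
\frac{\phi\,\phi'}{1-\phi^2}=\frac{\rho_S'}{\rho_S},
\]
equivalently $\dfrac{d}{dz}\log\!\big(\rho_S(z)\,\sqrt{1-\phi(z)^2}\big)=0$, i.e. the product of the two radii $\rho_S(z)\,\rho_{S^2}(\phi(z))$ is constant along the meridian.

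The last step is routine integration. Separating variables gives $-\tfrac12\log(1-\phi^2)=\log\rho_S-\tfrac{c}{2}$ for an integration constant $c$, hence $1-\phi^2=e^{c}\rho_S^{-2}$ and therefore $\phi(z)=\pm\sqrt{1-e^{c}(\rho_S(z))^{-2}}$, which is the claimed formula; the sign reflects the two orientations of the image meridian (the $\zeta\mapsto-\zeta$ symmetry of $S^2$) and $c$ is the one free real parameter of the holomorphic map. Conversely, any $\phi$ of this form satisfies the ODE, hence is conformal and orientation-preserving, so $f$ is holomorphic, and by Theorem \ref{Th2.5} the associated $F$ is then a biholomorphism onto a domain of $Tw(M)$. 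I expect the only delicate point to be the bookkeeping in the second step: correctly identifying the meridian conformal factors on both surfaces of revolution and selecting the orientation-preserving branch, since the opposite branch makes the \emph{ratio} $\rho_{S^2}(\phi)/\rho_S$ constant and yields an anti-holomorphic map. Once the ODE is pinned down, the integration and the determination of the admissible range (where $e^{c}\rho_S^{-2}\le 1$, so that $\phi$ is real and valued in $[-1,1]$) are immediate.
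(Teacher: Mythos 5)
Your proposal is correct relative to the paper and is essentially its proof verbatim: $U(1)$-equivariance reduces $f$ to the meridian map $(\theta,z)\mapsto(\theta,\phi(z))$, holomorphicity is tested by conformality and orientation of the diagonal differential, equating the parallel and meridian stretch factors gives the same separable ODE $\phi\phi'/(1-\phi^2)=\rho_S'/\rho_S$, and integration yields $(1-\phi^2)\rho_S(z)^2=e^c$, i.e. the claimed formula. The one step you leave implicit is precisely where the paper does its only computation: it takes the meridian length elements to be $\left\|\frac{\de}{\de z}\right\|=|\rho_S'(z)|$ and $\left\|\frac{\de}{\de \zeta}\right\|=|\rho_{S^2}'(\zeta)|$ rather than the full arc-length elements $\sqrt{1+(\rho_S'(z))^2}$ and $\sqrt{1+(\rho_{S^2}'(\zeta))^2}$ of the profile curves, and your asserted ODE is exactly the one that convention produces, so your argument and the paper's coincide---and stand or fall together---at that step.
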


\begin{proof}
We have that
\[
\left\| \frac{\de}{\de \theta} \right\| = \rho_{S}(z)       \qquad
\left\| f_*\frac{\de}{\de \theta} \right\|= \left\| \frac{\de}{\de \alpha} \right\|= \rho_{S^2}(\phi(z))
\]
and
\[
\left\| \frac{\de}{\de z} \right\|=|\rho_{S}'(z)| \qquad 
\left\| f_*\frac{\de}{\de z} \right\|= \left\| \phi'(z) \frac{\de}{\de \zeta} \right\|= |\phi'(z)| |(\rho_{S^2}'|_{\zeta=\phi(z)})|.
\]
This implies that
\[
\frac{\rho_{S}(z)}{\rho_{S^2}(\phi(z))}= \frac{|\rho_{S}'(z)|}{|\phi'(z)| |\rho_{S^2}'(\phi(z))|}
\]
or more explicitly
\[
|\phi'(z)|=    \frac{|\rho_{S}'(z)|    }{ \rho_{S}(z)}
                \frac{|(1-\phi(z)^2)|}{|\phi(z)|}.
\]
We need to solve for $\phi$, given $\rho_S$. \\
Up to a sign we have
$$
    \frac{\phi(z)} { (1-\phi(z)^2)}  \phi'(z) =   \frac{\rho_{S}'(z)    }{ \rho_{S}(z).}
              $$
              So
              $$
             [ \log ((1-\phi(z)^2) ]' =  - 2  [\log  (\rho_{S}  (z))]' 
              $$
              from which we get
              \[
              \log  \left ( (1-\phi(z)^2) (\rho_S (z))^2 \right) =c, 
              \]
              with c a constant and so 
              \[
              \phi(z)^2 = 1 -  e^c  ( \rho_S (z))^{- 2}.
              \]
\end{proof}

\smallskip

\begin{rmk}\label{Rmk1}
    In the above case the modified twistor $S(M)$ space is biholomorphic to $Tw(M)\setminus \{D_1, D_2\}$, where $D_1, D_2$ are the two divisors given respectively by the images of the K\"{a}hler form $\omega$ and  $-\omega$. Indeed, $f$ is clearly fiberwise either constant or bijective. In the first case, however, for any $m\in M$, we would have $$f|_{p^{-1}(m)}\equiv \pm  \, \omega_m,$$ which we exclude (otherwise the complex structure on $S(M)$ would have no interesting properties). \\
\end{rmk}

\section{Properties of $Tw(M)$ and $S(M)$ for a K\"ahler scalar-flat surface $M$}
We begin with some known facts about the geometry of twistor spaces and especially the ones of the K\"ahler scalar flat surfaces. 
It is known that all twistor spaces of compact  anti-self-dual 4-manifolds admit a balanced metric, but only the ones which are in addition Einstein with positive scalar curvature admit a K\"ahler metric. We will consider the construction of balanced metrics more explicitly and relate them to the metrics on $S(M)$. We first recall that the splitting $\Lambda^2M=\Lambda^2_+M+\Lambda^2_-M$ clearly induces a splitting $H^2(M, \R)=H^2_+(M, \R)\oplus H^2_-(M, \R)$. Then one has, using Serre duality, the following result.
\begin{Teorema} $($\cite[Theorem 5.3.]{Eastwood-Singer}$)$ The twistor space  $Tw(M)$ of a scalar-flat K\"ahler surface $M$ has the Fr\"ohliher spectral sequence degenerating at $E_1$ level if and only if it does not admit any non-parallel holomorphic vector fields. In this case, $$h^{2,2}(Tw(M)) = h^{1,1}(Tw(M)) = 1+b_-(M).$$
 \end{Teorema}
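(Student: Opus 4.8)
The plan is to reduce the statement to a single numerical comparison between Hodge and Betti numbers and then to identify the obstruction to equality with the presence of holomorphic vector fields. Recall that for any compact complex $n$-fold $X$ the Fr\"ohlicher spectral sequence has $E_1^{p,q}=H^q(X,\Omega^p)$ converging to $H^{p+q}_{\mathrm{dR}}(X,\C)$, and since every differential $d_r$ ($r\geq 1$) can only drop dimensions one has $\sum_{p,q}h^{p,q}(X)\geq \sum_k b_k(X)$ with equality if and only if the sequence degenerates at $E_1$. I would therefore compute both sides for $X=Tw(M)$ and track exactly where a discrepancy can arise.

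For the Betti numbers, the projection $\pi\colon Tw(M)\to M$ is a locally trivial $\mathbb{CP}^1$-bundle, so Leray--Hirsch gives $H^*(Tw(M);\R)\cong H^*(M;\R)\otimes H^*(\mathbb{CP}^1;\R)$ and hence $b_k(Tw(M))=b_k(M)+b_{k-2}(M)$. Using $b_0(M)=b_4(M)=1$, $b_1(M)=b_3(M)$, and the K\"ahler-surface identity $b_2(M)=1+2p_g(M)+b_-(M)$, this pins down $\sum_k b_k(Tw(M))$ explicitly.

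For the Hodge side I would compute $H^q(Tw(M),\Omega^p)$ by the Penrose transform, that is, by pushing forward along $\pi$. The relative cotangent sequence $0\to\pi^*\Omega^1_M\to\Omega^1_{Tw(M)}\to\Omega^1_{Tw(M)/M}\to 0$ and its exterior powers filter each $\Omega^p_{Tw(M)}$ with graded pieces $\pi^*\Omega^a_M\otimes\Omega^b_{Tw(M)/M}$; since $\Omega^1_{Tw(M)/M}$ restricts to $\mathcal{O}(-2)$ on each twistor line, the fibrewise cohomology of these pieces is controlled by the cohomology of $\mathcal{O}(k)$ on $\mathbb{CP}^1$ and reduces everything to the cohomology of explicit bundles on $M$. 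Here the hypotheses enter decisively: because $M$ is K\"ahler and scalar-flat I may use Hodge theory on $M$ together with the $\Lambda^2_\pm$-decomposition to evaluate these groups, while Serre duality on the $3$-fold $Tw(M)$ (flagged already in the statement) halves the work through $h^{p,q}=h^{3-p,3-q}$. This computation yields in particular $h^{1,1}(Tw(M))=h^{2,2}(Tw(M))=h^{1,1}(M)=1+b_-(M)$, the last equality being the surface identity $h^{1,1}(M)=b_2(M)-2p_g(M)=1+b_-(M)$.

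The remaining and central point is to show that the numerical equality $\sum h^{p,q}=\sum b_k$ fails precisely when a non-parallel holomorphic vector field exists. Under the Penrose transform the relevant first differential $d_1\colon H^q(\Omega^p)\to H^q(\Omega^{p+1})$ becomes an explicit linear differential operator on $M$, and by Serre duality the obstruction is paired with $H^0(Tw(M),T\,Tw(M))$. Parallel holomorphic vector fields lie in the kernel of the operator and are already accounted for in the matching of $E_1$-dimensions with Betti numbers, so they impose no obstruction; a non-parallel holomorphic vector field instead enlarges one of the $E_1$-entries beyond what the Betti numbers permit, forcing $\sum h^{p,q}>\sum b_k$ and hence nondegeneration, and conversely the absence of such fields makes the two sums agree. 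The hard part is exactly this last step: carrying out the Penrose transform precisely enough to realize $d_1$ as a geometric operator on $M$ and to match the failure of its vanishing with the existence of non-parallel holomorphic vector fields. The bundle-cohomology computations on $M$ are routine once the K\"ahler and scalar-flat conditions are invoked, but identifying the obstruction requires the full strength of the twistor correspondence.
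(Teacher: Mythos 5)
You should first be aware that the paper itself does not prove this statement: it is quoted from \cite[Theorem 5.3]{Eastwood-Singer}, so there is no internal proof to compare against, and your attempt has to be judged against the actual argument of that reference (which, like your outline, is built on the Penrose transform). Your bookkeeping framework is fine: degeneration at $E_1$ is equivalent to $\sum_{p,q}h^{p,q}=\sum_k b_k$, and the Betti numbers of $Tw(M)$ are $b_k(M)+b_{k-2}(M)$ since the sphere bundle has no real Euler class. The gaps are in the two steps that carry all the content.

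First, the technical engine you propose does not exist as stated: the twistor projection $\pi\colon Tw(M)\to M$ is \emph{not} holomorphic (the tautological complex structure on the horizontal space varies along each fibre), so $\pi^*\Omega^1_M$ is not a holomorphic bundle on $Tw(M)$ and there is no exact sequence of holomorphic sheaves $0\to\pi^*\Omega^1_M\to\Omega^1_{Tw(M)}\to\Omega^1_{Tw(M)/M}\to 0$ whose exterior powers you could use. The Penrose transform must instead be set up by filtering the Dolbeault complex of $Tw(M)$ by smooth sub-bundles adapted to the real fibration (equivalently, via the relative de Rham/involutive-structure machinery), after which the $E_1$-differentials become differential operators on $M$; this is what Eastwood--Singer actually do, and it is substantially more delicate than restricting a sheaf sequence to twistor lines. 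Second, and decisively, the heart of the theorem --- that the only possibly non-vanishing differential is detected exactly by the non-parallel holomorphic vector fields --- is not proved but explicitly deferred (``the hard part is exactly this last step''); since the rest of the outline is routine counting, what you have is a plan for a proof, not a proof. Third, your outline is internally inconsistent on precisely this point: you first assert that the transform yields $h^{1,1}(Tw(M))=h^{2,2}(Tw(M))=1+b_-(M)$ unconditionally, and later that a non-parallel holomorphic vector field ``enlarges one of the $E_1$-entries''. The second assertion is the correct one: $h^{1,1}=h^{2,2}$ (these are Serre-dual to each other on the $3$-fold) jump by a term measuring the non-parallel holomorphic vector fields, which is exactly why the equality $h^{1,1}=1+b_-(M)$ appears in the statement only under the degeneration hypothesis (``In this case''). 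Relatedly, the relevant fields are holomorphic vector fields on the surface $M$ --- this is how the paper uses the theorem in its Corollary and Example --- not elements of $H^0(Tw(M),T\,Tw(M))$, which correspond to (complexified) conformal Killing fields of $M$ and form a different space; pairing the obstruction with the latter, as you suggest, points the final identification in the wrong direction.
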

Using this fact, we can show that when $b_-(M)=1$ the set of Dolbeault $(2,2)$-classes admitting a positive representative is an open cone in $H^{2,2} (Tw(M))$.

\hfill

We will now present a method to construct many balanced metrics on the complex manifolds $Tw(M)$ and $(S(M), J)$,  where  $(M, I, g, \omega)$ is a scalar-flat K\"ahler  surface.  By Remark \ref{Rmk1} above,  this is equivalent to produce balanced metrics on $Tw(M)\setminus \{D_1, D_2\}$. It is well known that the standard metric  with associated fundamental form   $$\Omega|_p=  \pi^*_p  \, \omega_p  +\omega_{FS}$$  where $\pi: Tw (M) \rightarrow M$ and  $\omega_p (X,Y) = g(I_pX,Y)$, is balanced on $Tw(M)$, though not complete on $Tw(M)\setminus \{D_1, D_2\}$ (\cite{Michelsohn, Oleg}).
The proof is based on the fact that  the $4$-form $\Omega^2$ is a sum of 3 terms. One term vanishes for dimensional reason, the other is just a pull-back of the volume form on $M$ , so the only non-trivial point to prove is $(d \pi^*\omega_p) \wedge \omega_{FS} = 0$. This follows from the fact that $d\pi^*\omega_p(X^h, Y^h, Z^h) = 0$,  for any horizontal vectors $X^h,Y^h,Z^h$, so $d\pi^*\omega_p = \omega_p\wedge \alpha$,  where $\alpha$ is vertical. But then $\alpha\wedge\omega_{FS} = 0$ for dimensional reasons.
 Now we introduce a partial conformal change, i.e. consider $${\Omega_{h}|}_p= \pi^*_p\omega_p  +e^h\omega_{FS},$$
where $h$ is a  smooth rotationally  invariant function on $S^2$. Then we have:
\begin{Teorema}\label{balancedness}
If $M$ is a scalar-flat K\"ahler surface, then, for  every smooth,  rotation invariant function $h$ on $S^2$, the $2$-form  $\Omega_h$  defines a  Hermitian metric on $Tw(M)$ and $S(M)$.  Moreover $d\Omega_h^2 =0$.
\end{Teorema}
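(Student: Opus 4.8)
The plan is to prove the two assertions separately, carrying out the computation on $Tw(M)$ and deducing the statement for $S(M)$ from the biholomorphism $F\colon S(M)\to Tw(M)\setminus\{D_1,D_2\}$ of Theorem \ref{Th2.5} and Remark \ref{Rmk1}. For the Hermitian claim I would first note that, since $h$ is rotation invariant and the structure group of $\pi$ reduces to $U(1)$, the positive function $e^h$ descends to a well-defined smooth function on $Tw(M)$, so $\Omega_h$ is a well-defined real $2$-form. The almost complex structure $J$ preserves the splitting $T(Tw(M))=\mathcal H\oplus\mathcal V$; moreover $\pi^*\omega_p$ annihilates $\mathcal V$ while $\omega_{FS}$ annihilates $\mathcal H$, and on each summand $J$ is compatible with the relevant form. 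Multiplying $\omega_{FS}$ by the positive factor $e^h$ preserves both $J$-invariance and positivity, so $\Omega_h(\,\cdot\,,J\,\cdot\,)$ is positive-definite symmetric, i.e. $\Omega_h$ is the fundamental form of a Hermitian metric.

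For the balanced condition I would expand
\[
\Omega_h^2=(\pi^*\omega_p)^2+2e^h\,\pi^*\omega_p\wedge\omega_{FS}+e^{2h}\,\omega_{FS}^2.
\]
Since $\omega_p^2=2\,\mathrm{vol}_g$ for every compatible $\omega_p$, the first term equals $2\,\pi^*\mathrm{vol}_g$, the pullback of a closed top form on $M$, hence is closed; the last term vanishes because $\omega_{FS}^2$ is a vertical $4$-form on the $2$-dimensional fibres. Everything therefore reduces to $d\big(e^h\,\pi^*\omega_p\wedge\omega_{FS}\big)=0$, which I would split as
\[
d\big(e^h\,\pi^*\omega_p\wedge\omega_{FS}\big)=e^h\,dh\wedge\pi^*\omega_p\wedge\omega_{FS}+e^h\,d\big(\pi^*\omega_p\wedge\omega_{FS}\big).
\]

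To kill the second summand I would work in the local orthonormal coframe $(\sigma_1,\sigma_2,\sigma_3)$ with $\sigma_1=\omega$ parallel and $\n\sigma_2=\beta\otimes\sigma_3$, $\n\sigma_3=-\beta\otimes\sigma_2$, introduce fibre coordinates $(z,\theta)$ with $\omega_p=z\,\pi^*\sigma_1+\sqrt{1-z^2}\,(\cos\theta\,\pi^*\sigma_2+\sin\theta\,\pi^*\sigma_3)$, and record that the globally defined vertical angular form is $\eta=d\theta+\pi^*\beta$, so that $\omega_{FS}=-\,dz\wedge\eta$ and $d\omega_{FS}=dz\wedge\pi^*(d\beta)$. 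A direct computation (as recalled above) shows the purely horizontal part of $d\pi^*\omega_p$ vanishes, whence $d\pi^*\omega_p\wedge\omega_{FS}=0$ for dimensional reasons, while $\pi^*\omega_p\wedge d\omega_{FS}=\pm\,dz\wedge\pi^*(\omega_p\wedge d\beta)$. Here the scalar-flat Kähler hypothesis enters decisively: the curvature $d\beta$ is, up to sign, the Ricci form $\varrho$ of $(M,I,g)$, and $\mathrm{Scal}(g)=0$ forces $\varrho$ to be anti-self-dual, whereas every $\omega_p$ is self-dual; by the orthogonality of $\Lambda^2_+$ and $\Lambda^2_-$ one gets $\omega_p\wedge d\beta=0$, so $\pi^*\omega_p\wedge d\omega_{FS}=0$ and the whole second summand vanishes. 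For the first summand, rotation invariance of $h$ together with the fact that $z$ is constant along horizontal curves (the axis $\sigma_1$ is parallel) gives $dh=h'(z)\,dz$, a vertical form; since $\omega_{FS}=-dz\wedge\eta$ already carries $dz$, the term $dh\wedge\pi^*\omega_p\wedge\omega_{FS}$ contains $dz\wedge dz$ and vanishes. Hence $d\Omega_h^2=0$.

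I expect the main obstacle to be the curvature bookkeeping of the second summand: setting up the parallel $U(1)$-frame, computing $d\pi^*\omega_p$ and $d\omega_{FS}$, and, above all, identifying $d\beta$ with the Ricci form so as to invoke the self-dual/anti-self-dual orthogonality that encodes scalar-flatness. By contrast the conformal factor contributes only the term $dh\wedge\pi^*\omega_p\wedge\omega_{FS}$, which dies purely for dimensional reasons once one knows $dh$ is proportional to $dz$; this is precisely where rotation invariance of $h$ is used, and it explains why a non-invariant conformal change on the fibre would fail. The passage to $S(M)$ then requires no new computation: one transports the metric through the biholomorphism $F$, or repeats the identical argument, since it uses only that the fibre is two-dimensional, that $\sigma_1$ is parallel, and the self-dual/anti-self-dual splitting.
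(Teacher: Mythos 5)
Your proof is correct, and its skeleton coincides with the paper's: expand $\Omega_h^2$, note that $(\pi^*\omega_p)^2$ is the pullback of a constant multiple of the volume form of $M$ (hence closed), that $e^{2h}\omega_{FS}^2=0$ because the fibres are $2$-dimensional, and reduce everything to the cross term $e^h\,\pi^*\omega_p\wedge\omega_{FS}$; your treatment of $(d\pi^*\omega_p)\wedge\omega_{FS}=0$ (vanishing of the purely horizontal part plus a dimension count) and of the conformal factor (rotation invariance gives $dh=h'(z)\,dz$, which dies against the $dz$ in $\omega_{FS}$) is also the paper's. Where you genuinely depart from the paper is in the term $\pi^*\omega_p\wedge d\omega_{FS}$. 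The paper disposes of it by asserting $d(e^h\omega_{FS})=0$ ``since $h$ is a function on $S^2$ and $\omega_{FS}$ is its volume form'', but this assertion is false in general: $\omega_{FS}$ is the fibrewise area form extended by the connection, $\omega_{FS}=\pm\, dz\wedge(d\theta+\pi^*\beta)$, so $d\omega_{FS}=\mp\, dz\wedge\pi^*(d\beta)$, and $d\beta$ is, up to sign, the Ricci form of $(M,I,g)$; this is nonzero unless $g$ is Ricci-flat (in particular it is nonzero for the LeBrun-type scalar-flat K\"ahler surfaces the paper itself cites), and the paper's shortcut is literally valid only when $\Lambda^2_+$ is flat, e.g. in the hyperk\"ahler case. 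Your argument supplies exactly the missing step: scalar-flatness makes the Ricci form anti-self-dual, each $\omega_p$ is self-dual, and $\Lambda^2_+\wedge\Lambda^2_-=0$, whence $\pi^*\omega_p\wedge d\omega_{FS}=0$. This is the one place where $\mathrm{Scal}(g)=0$ actually enters the balanced equation (beyond being needed for the integrability of $J$), a point invisible in the paper's proof. So your proposal is not merely an alternative route; it is a correct completion of an argument that the paper leaves with a gap, and the final transfer of the conclusion to $S(M)$ through the biholomorphism with $Tw(M)\setminus\{D_1,D_2\}$ is identical in both.
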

\begin{proof}

As mentioned above, the  balanced condition $d\Omega_h^2=0$ reads 
\[
d\Omega_h^2=d(e^h \wedge \omega_{FS} \wedge \pi^*_p \omega_p) =0, 
\]
as $\omega^2_p$ is closed and $\omega_{FS}^2=0$ for dimensional reasons. From the considerations above $d(e^h \omega_{FS} \wedge \pi^*_p \omega_p) = de^h \wedge \omega_{FS} \wedge \pi^*_p\omega_p$.  But $d(e^h \wedge \omega_{FS} )= 0$ since $h$ is a function on $S^2$ and $\omega_{FS}$ is its volume form.  Since $S(M)$ is biholomorphic to $Tw(M)\setminus \{D_1,D_2\}$ we obtain that $\Omega_{h}$ defines also  a balanced metric on $S(M)$.
\end{proof}
It is not hard to study also the completeness of $\Omega_h$ on $S(M)$. We need the following general lemma.
\begin{Lemma}
        Let $\pi: (E,g_E)\to B$ be a Riemannian fiber bundle with fiber $F$. Assume that $g_E$ can be written as $g_E=\pi^* g_B+g_F$, where $g_B$ is a complete metric on $B$ and $g_F $ is a symmetric two form on $E$ which restricts to a complete metric on every fiber. Then $g_E$ is complete.
    \end{Lemma}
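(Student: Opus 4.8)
The plan is to prove completeness via the standard characterization: a Riemannian metric is complete if and only if every Cauchy sequence converges, equivalently (by Hopf--Rinow in the locally-compact case) if and only if the distance function is proper, i.e.\ closed metric balls are compact, or most conveniently if and only if every geodesic extends indefinitely. Since we are given that $g_E = \pi^* g_B + g_F$ with $g_B$ complete and $g_F|_{\text{fiber}}$ complete, I would work directly with Cauchy sequences and the path-length metric.

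First I would establish the key comparison inequality for lengths. For any piecewise-$C^1$ curve $\gamma$ in $E$, its velocity splits as $\dot\gamma = (\dot\gamma)^h + (\dot\gamma)^v$ into $g_E$-orthogonal horizontal and vertical parts, since $g_E = \pi^*g_B + g_F$ respects this splitting by hypothesis. Hence the $g_E$-speed dominates both the horizontal speed and the vertical speed:
\[
\|\dot\gamma\|_{g_E}^2 = \|(\dot\gamma)^h\|_{\pi^*g_B}^2 + \|(\dot\gamma)^v\|_{g_F}^2 \ge \|\pi_*\dot\gamma\|_{g_B}^2.
\]
Integrating gives $\mathrm{length}_{g_E}(\gamma) \ge \mathrm{length}_{g_B}(\pi\circ\gamma)$, so $\pi$ is distance-nonincreasing: $d_{g_B}(\pi(x),\pi(y)) \le d_{g_E}(x,y)$ for all $x,y\in E$.

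Next I would take a $g_E$-Cauchy sequence $\{x_n\}$ in $E$ and show it converges. By the inequality above $\{\pi(x_n)\}$ is $g_B$-Cauchy, hence converges to some $b\in B$ by completeness of $g_B$; passing to a subordinate trivialization $\pi^{-1}(U)\cong U\times F$ over a neighborhood $U$ of $b$ I may assume all but finitely many $x_n$ lie over $U$. The remaining issue is to control the fiber coordinate. The cleanest route is to bound the vertical displacement by the $g_E$-distance: once the base points are confined to a small relatively compact neighborhood, the fiber metrics $g_F(\cdot)$ vary continuously and are uniformly comparable over that neighborhood, so a short $g_E$-path joining $x_n$ to $x_m$ projects to a short $g_B$-path in the base while its vertical cost controls the $g_F$-distance between the fiber coordinates up to a bounded multiplicative constant. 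This shows the fiber coordinates form a Cauchy sequence for a fixed (reference-fiber) complete metric, hence converge, and therefore $x_n \to x \in \pi^{-1}(U) \subset E$.

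The main obstacle is precisely this last step: the hypothesis only asserts that $g_F$ restricts to a complete metric on each individual fiber, so I must pass from completeness fiber-by-fiber to a uniform statement on a neighborhood in order to extract a limit of the fiber coordinates. The delicate point is that a $g_E$-geodesic segment need not stay in a single fiber, so its horizontal wandering could, a priori, interact with the fiber geometry; I would handle this by shrinking $U$ to a relatively compact coordinate ball where $g_F$ is uniformly bounded below and above by a fixed complete fiber metric (using continuity of $g_F$ and compactness of $\overline U$), thereby trading the pointwise completeness hypothesis for a uniform comparison valid along the whole Cauchy tail. Once that comparison is in place, the convergence of the fiber coordinates, and hence of $x_n$, follows, and the completeness of $g_E$ is established.
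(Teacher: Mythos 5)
Your opening moves (the length comparison showing $\pi$ is distance--nonincreasing, projecting the Cauchy sequence, and producing the limit $b\in B$) coincide with the paper's proof. The genuine gap is exactly at the step you yourself flag as delicate: you need the restrictions of $g_F$ to the fibers over $\overline U$ to be uniformly two-sided comparable to one fixed complete reference metric, and you propose to get this from ``continuity of $g_F$ and compactness of $\overline U$''. Compactness of $\overline U$ buys nothing here, because $\pi^{-1}(\overline U)\cong \overline U\times F$ is non-compact whenever the fiber $F$ is; continuity gives uniform bounds only on sets $\overline U\times K$ with $K\subset F$ compact, and you cannot confine the fiber coordinates of the Cauchy tail to such a $K$ in advance --- that is precisely what you are trying to prove, so the argument is circular. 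The non-compact case is the one that matters: in the paper's application (Corollary 3.5) the fiber of $S(M)$ is $S^2$ minus the two poles with a complete metric $e^h g_{FS}$. Moreover, no refinement of the continuity/compactness idea can close the gap, because with only ``each fiber is complete'' as the vertical hypothesis the needed uniform comparison --- and indeed the lemma read literally --- is false. A counterexample: $E=\mathbb{R}^2$, $\pi(x,y)=x$, $g_B=dx^2$, $g_F=e^{-2y^2\psi(xy)}\,dy^2$ with $\psi$ a smooth bump function supported in $[1/2,2]$ and $\psi(1)=1$. Every fiber is complete (the coefficient is identically $1$ outside a compact $y$-interval), the base is complete, yet the divergent curve $y\mapsto(1/y,\,y)$, $y\ge 1$, has finite $g_E$-length, so $g_E$ is incomplete.

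What makes the statement true in the setting where the paper uses it is extra structure that your trivialization argument never invokes: $S(M)$ is an associated bundle whose structure group $U(1)$ acts by isometries of the fixed rotation-invariant fiber metric, so parallel transport along any base curve is an isometry between fibers. The paper's proof is built on this mechanism rather than on comparing metrics of nearby fibers: it horizontally lifts minimizing geodesics from $\pi(x_n)$ to $b$, thereby transporting each $x_n$ into the \emph{single} fiber $\pi^{-1}(b)$ at a cost $d_B(\pi(x_n),b)\to 0$, shows the transported points form a Cauchy sequence in that fiber, and invokes completeness of that one fiber only. Note also a second defect of the fixed-trivialization route even if you grant the isometric structure group: the $g_E$-orthogonal horizontal distribution is tilted relative to the product distribution of the trivialization, and the tilt moves the fiber coordinate through fundamental vector fields of the group action, whose norms (for $e^h g_{FS}$, the rotation field has norm $e^{h/2}|\cos\phi|$) can be unbounded on a non-compact fiber; hence ``small vertical cost'' does not control the displacement of the trivialized fiber coordinate, and your final claim that those coordinates are Cauchy can fail even when $g_E$ is complete. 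The repair is to abandon the fixed trivialization and transport along the base curves themselves --- i.e., the paper's argument, with the isometric-structure-group hypothesis made explicit.
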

    \begin{proof}
        We will prove that any Cauchy sequence $(x_n)\subset E$ converges to a point $x\in E$. The sequence on the base $(\pi(x_n))$ is Cauchy as well, since for any smooth curve $\gamma: [0,1]\to E$ we have that 
        \[
        \int_0^1 \| \gamma'(t)\| \, dt \leq \int_0^1 \| (\pi\circ \gamma)'(t)\| \, dt
        \]
        By hypothesis, $(\pi(x_n))$ converges to a point $b\in B$. 
     For each $\pi(x_n)$, consider the (not necessarily unique) curve $\gamma_n$  satisfying  $l(\gamma_n)=dist_B(\pi(x_n),b)$. This curve exists by the completeness of $g_B$. One can consider their horizontal lifts $\tilde{\gamma}_n$, starting at $x_n$. By definition $\tilde{\gamma}_n$'s tangent space is purely horizontal, thus $l_E(\tilde{\gamma}_n)=l_B({\gamma}_n)$. Let $y_n$ be the endpoint of $\tilde{\gamma}_n$, which clearly sits over $b$. Consider the sequence of curves $\gamma_n \subset B$. Each of them is a geodesic starting at  $x_n$ and ending at $b$, thus $\| \gamma_n -\gamma_m \|_{C^2}$ is arbitrarily small for $n,m$ big enough. Since horizontal lifts are solutions of differential equations, $d(y_n, y_m)$ is arbitrarily small for $n,m$ big enough. By hypothesis, $y_n$ converges to a point $y\in \pi^{-1}(b)$. Then we have
        \[
        d(x_n, y) \leq d(x_n, y_n)+ d(y_n, y) \leq d_B(\pi(x_n), \pi(x_m)) + d(y_n, y)
        \]
        i.e. $x_n$ converges to $y$.

    \end{proof}
  
As a simple consequence of the  above lemma we have  the following result.
  
\begin{Corollary} \label{Cor-completness} 
If $M$ is  a scalar-flat K\"ahler surface and $h$ is a  smooth, rotation
invariant function  on $S^2$ such that   metric induced by $e^h \omega_{FS}$ on the  vertical part is complete,  then the the balanced metric on $S(M)$  defined by $\Omega_h$ is complete.
\end{Corollary}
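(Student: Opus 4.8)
The plan is to apply the preceding general lemma to the Riemannian fiber bundle underlying $S(M)$. Recall that $S(M)$ carries the Hermitian metric $h = \pi^* g + g^v$ introduced in Section 2, and that the balanced form $\Omega_h = \pi^* \omega_p + e^h \omega_{FS}$ induces a Riemannian metric on $S(M)$ which, by the compatibility of $J$ with the orthogonal splitting $T(S(M)) = \mathcal{H} \oplus \mathcal{V}$, splits as an orthogonal sum of its horizontal and vertical parts. Concretely, the associated Riemannian metric is of the form $\pi^* g_M + g_F$, where $g_M$ is the metric on the base $M$ coming from $\omega_p$ (i.e.\ the original K\"ahler metric $g$), and $g_F$ is the symmetric two-form that restricts on each fiber to the metric determined by $e^h \omega_{FS}$. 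This is exactly the shape required by the hypotheses of the lemma.

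The key steps, in order, are as follows. First I would verify that the metric on the base induced by $\Omega_h$ is complete: since $M$ is assumed to be a scalar-flat K\"ahler surface and the base metric is the fixed K\"ahler metric $g$ of $M$, this is part of the standing hypotheses (one reads ``scalar-flat K\"ahler surface'' as carrying a complete metric $g$, as is implicit throughout). Second, I would record that the vertical part $g_F$ restricts on each fiber to the metric induced by $e^h \omega_{FS}$, which is complete precisely by the hypothesis of the corollary. Third, I would check that the horizontal distribution used in the lemma is the same as the connection-induced horizontal space $\mathcal{H}$ on $S(M)$, so that horizontal lifts of base geodesics are isometric lifts with respect to $\pi^* g_M$; this is immediate from the definition $h = \pi^* g + g^v$, under which horizontal vectors have the same length as their projections. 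With these three facts in place, the lemma applies verbatim and yields completeness of the metric defined by $\Omega_h$ on $S(M)$.

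The main obstacle is conceptual rather than computational: one must confirm that the Riemannian metric associated to the Hermitian form $\Omega_h$ genuinely decomposes as $\pi^* g_B + g_F$ with $g_B$ complete and $g_F$ fiberwise complete, i.e.\ that the ``twisting'' of the fiber metric by the connection does not destroy the product-type structure the lemma requires. This is ensured by the rotational invariance of $h$, which guarantees that $e^h \omega_{FS}$ is a genuine fiber metric independent of the horizontal directions, together with the orthogonality of $\mathcal{H}$ and $\mathcal{V}$. Once this structural observation is made, the result follows as an immediate application of the lemma, and I would close the argument by simply invoking it.

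\begin{proof}
The Hermitian metric on $S(M)$ associated to the balanced form $\Omega_h$ splits, with respect to the orthogonal decomposition $T(S(M)) = \mathcal{H}\oplus\mathcal{V}$, as $\pi^* g + g_F$, where $g$ is the complete K\"ahler metric of the base $M$ and $g_F$ is the vertical symmetric two-form induced by $e^h\omega_{FS}$. By the rotational invariance of $h$, the form $e^h\omega_{FS}$ restricts on each fiber to a genuine metric, which is complete by hypothesis. Since horizontal vectors have the same length as their projections, the horizontal distribution $\mathcal{H}$ coincides with the horizontal lift used in the preceding lemma, and horizontal lifts of base curves preserve length. Thus all hypotheses of the lemma are satisfied, and it follows that the metric defined by $\Omega_h$ on $S(M)$ is complete.
\end{proof}
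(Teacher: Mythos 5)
Your proof is correct and takes essentially the same approach as the paper: the corollary is stated there as an immediate consequence of the preceding lemma, applied exactly as you do to the splitting $\pi^* g + e^{h} g_{FS}$ of the Riemannian metric associated with $\Omega_h$, with fiberwise completeness given by hypothesis and completeness of the base metric $g$ taken as implicit. Your write-up merely spells out the verification of the lemma's hypotheses (orthogonality of $\mathcal{H}$ and $\mathcal{V}$, well-definedness of $e^{h}\omega_{FS}$ via rotational invariance), which the paper leaves unsaid.
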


\begin{rmk}  The completeness of the metric on $S(M)$   defined by $\Omega_h$ can be analyzed looking at the corresponding metric on $Tw(M)\setminus \{D_1,D_2\}$. The Fubini-Study metric on $\C\proj^1$, identified with $S^2$, is locally given by
    \[
    g_{FS} =d\phi^2+\cos \phi\, d\theta^2,
    \]
    where $\phi, \theta$ are, respectively, the latitude and longitude.  In these local coordinates, the metric
 corresponding to $e^h\omega_{FS}$ is given by 
    \[
  e^{h(\phi, \theta)} \, \left( d\phi^2+\cos \phi\, d\theta^2 \right)
    \]
    and in our case $h=h(\phi)$, since $h$ is   rotation
invariant.  Then the completeness of the metric induced by $\Omega_h$ can be checked  as follows. Consider  the curve $\theta=\theta_0$,  around the north pole,  the necessary condition is 
    \[
    \int_{0}^{\pi/2} e^{h(\phi)/2} \, d\phi = \infty.
    \]
    Since
    \[
    \int_{0}^{\pi/2} \sqrt{ e^{h(\phi)}(1+\cos^2(\phi) (\theta^{'}(\phi)) }  \, d\phi \geq
    \int_{0}^{\pi/2} e^{h(\phi)/2} \, d\phi 
    \]
    the condition is also sufficient. Analogous computations hold for the south pole. 
    This fiberwise condition is clearly  necessary for the completeness of $\Omega_h$, but it is also sufficient for the previous lemma.
    
\end{rmk}

From the proof of Theorem \ref{balancedness} we  can also  get the following:

\begin{Corollary}
Let $M$ be a scalar-flat K\"ahler surface with $b_-(M)=1$  and without non-parallel holomorphic vector fields. Then the forms $\Omega^2_{a,b}$, where $\Omega_{a,b} = a \, \pi^*\omega_p + b \, \omega_{FS}$ for $a,b>0$, define an open cone of closed positive $(2,2)$-forms in $H^{2,2}(Tw(M))$.
\end{Corollary}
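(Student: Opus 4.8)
The plan is to read off the statement from the balancedness computation already carried out in the proof of Theorem \ref{balancedness}, combined with the finite-dimensionality supplied by the Eastwood--Singer degeneration. First I would record that each $\Omega_{a,b}^2$ is a closed, \emph{strictly} positive $(2,2)$-form. Since $a,b>0$, the $(1,1)$-form $\Omega_{a,b}=a\,\pi^*\omega_p+b\,\omega_{FS}$ is the fundamental form of a genuine Hermitian metric, so its square lies in the interior of the positive cone at every point, i.e. it is a strictly positive $(2,2)$-form. Taking $h\equiv\log(b/a)$ in Theorem \ref{balancedness} and rescaling gives $d\Omega_{a,b}^2=0$; equivalently, expanding and using $\omega_{FS}^2=0$,
\[
\Omega_{a,b}^2=a^2\,(\pi^*\omega_p)^2+2ab\,\pi^*\omega_p\wedge\omega_{FS},
\]
where both terms are closed exactly by the argument in the proof of Theorem \ref{balancedness}. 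Being of pure type $(2,2)$ and $d$-closed, $\Omega_{a,b}^2$ is in particular $\bar\partial$-closed, so it determines a class $[\Omega_{a,b}^2]\in H^{2,2}(Tw(M))$.

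Next I would observe the cone structure. For $\lambda>0$ one has $\lambda\,\Omega_{a,b}^2=\Omega_{\sqrt\lambda a,\sqrt\lambda b}^2$, and more intrinsically strict positivity of a $(2,2)$-form is preserved under multiplication by positive scalars. Hence the set $\mathcal C\subset H^{2,2}(Tw(M))$ of classes admitting a strictly positive $\bar\partial$-closed representative is a cone, and it is nonempty since it contains every $[\Omega_{a,b}^2]$.

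The main point, and the step I expect to be the genuine obstacle, is the openness of $\mathcal C$. Here I would feed in the hypotheses through the Eastwood--Singer theorem: since $M$ is scalar-flat K\"ahler with no non-parallel holomorphic vector fields, the Fr\"olicher spectral sequence degenerates at $E_1$ and $H^{2,2}(Tw(M))$ is finite-dimensional (of dimension $1+b_-(M)=2$). Fixing smooth $\bar\partial$-closed $(2,2)$-forms $\Phi_1,\dots,\Phi_N$ representing a basis, any class near $\mathfrak c_0=[\Omega_{a,b}^2]$ can be written $\mathfrak c=\mathfrak c_0+\sum_i t_i[\Phi_i]$ with $|t_i|$ small, and is then represented by $\Omega_{a,b}^2+\sum_i t_i\Phi_i$. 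Because $Tw(M)$ is compact, strict positivity is an open condition in the $C^0$-topology, so this representative remains strictly positive once the $t_i$ are small, whence $\mathfrak c\in\mathcal C$ and $\mathcal C$ is open. The delicate part is precisely this transfer: one must know that classes close to $\mathfrak c_0$ admit \emph{form} representatives $C^0$-close to $\Omega_{a,b}^2$, so that the pointwise-open positivity condition becomes an open condition in cohomology; this is exactly where the finite-dimensionality from the degeneration is indispensable, since it allows one to fix the finite family $\Phi_i$ and control the representatives linearly in the class. Combining the three steps, the forms $\Omega_{a,b}^2$ lie in, and exhibit the nonemptiness of, the open cone $\mathcal C$ of closed positive $(2,2)$-classes in $H^{2,2}(Tw(M))$.
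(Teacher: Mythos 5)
Your first two steps coincide with the paper's: strict positivity of $\Omega_{a,b}^2$ is immediate since $\Omega_{a,b}$ is a Hermitian form, and closedness follows from Theorem \ref{balancedness} applied with the constant function $h=\log(b/a)$ and rescaling. The gap is in the openness step, where you prove a different statement from the one claimed. You show that the cone $\mathcal{C}$ of \emph{all} classes admitting a positive $\bar{\partial}$-closed representative is open and contains every $[\Omega_{a,b}^2]$. But the Corollary asserts that the set $\{[\Omega_{a,b}^2]:a,b>0\}$ itself is an open cone in $H^{2,2}(Tw(M))$. Writing $\Omega_{a,b}^2=a^2(\pi^*\omega_p)^2+2ab\,\pi^*\omega_p\wedge\omega_{FS}$ (using $\omega_{FS}^2=0$), this set is the image of the open quadrant under $(a,b)\mapsto a^2X+2abY$, where $X=[(\pi^*\omega_p)^2]$ and $Y=[\pi^*\omega_p\wedge\omega_{FS}]$; it is open exactly when $X$ and $Y$ are linearly independent and $h^{2,2}(Tw(M))=2$. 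Your argument establishes neither point: you never show that distinct pairs $(a,b)$ give distinct classes, and a priori $Y$ could be a multiple of $X$, in which case the family $\{[\Omega_{a,b}^2]\}$ would collapse onto a one-dimensional ray --- still contained in your open cone $\mathcal{C}$, but certainly not open.

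The missing ingredient is precisely what the paper supplies after the statement: the identities $\Omega_{a,b}^2\wedge\omega_{FS}=a^2\,vol_{Tw(M)}$ and $\Omega_{a,b}^2\wedge\pi^*_p\omega_p=2ab\,vol_{Tw(M)}$, together with Stokes' theorem, show that different values of $(a,b)$ give different cohomology classes, i.e. $X$ and $Y$ are independent; combined with the Eastwood--Singer count $h^{2,2}(Tw(M))=1+b_-(M)=2$, the image of the quadrant is then open. A telltale sign of the misreading is that your proof never actually uses $b_-(M)=1$: finite-dimensionality of Dolbeault cohomology holds for every compact complex manifold by elliptic theory (it is not what the $E_1$-degeneration buys you), and openness of $\mathcal{C}$ would hold for any value of $b_-$. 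The hypothesis $b_-(M)=1$ is needed precisely so that a two-parameter family of classes can fill an open subset of $H^{2,2}(Tw(M))$. A further minor mismatch: your perturbed representatives $\Omega_{a,b}^2+\sum_i t_i\Phi_i$ are only $\bar{\partial}$-closed, whereas the statement concerns closed positive $(2,2)$-forms, so even for the cone $\mathcal{C}$ your argument proves openness only in the $\bar{\partial}$-closed sense.
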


For different values of $a$ and $b$ we  get different de Rham cohomology classes. This is implied by 
\[
\Omega_{a,b} ^2 \wedge \omega_{FS} = a^2 vol_{Tw(M)}, \quad \Omega_{a,b} ^2 \wedge \pi^*_p  \omega_p=  2 a b \, vol_{Tw(M)},
\]
and applying Stokes theorem.

\begin{Ex} Excluding complex surfaces with Ricci-flat K\"ahler metrics, compact scalar-flat K\"ahler surfaces have negative Kodaira dimension by a vanishing theorem of Yau \cite{Yau} and therefore  their minimal models are either ruled or biholomorphic to ${\mathbb {CP}}^2$, but by Chern-Weil theory  the latter possibility is excluded. 
The ruled surfaces are obtained as a quotient of  $H\times {\mathbb {CP}}^1$,  where $H$ is the upper half-plane.  They correspond to projectivized polistable rank $2$ bundles over a compact  Riemann surface \cite{Barth-Hulek-Peters-Van de Ven}; they admit
non-parallel holomorphic vector fields \cite{R-S}, but no parallel ones since their Euler characteristic is non-zero. The first examples which satisfy the conditions of the Corollary are the construction of  \cite{LeBrun} of scalar-flat K\"ahler metrics on blow-up of ruled surfaces. In  \cite{KLP} the authors extended the construction to scalar-fat metrics on ${\mathbb {CP}}^2\# 14\overline{\mathbb {CP}}^2$. In fact in \cite{KLP} they proved that  if one starts  with a compact K\"ahler surface $M$ whose total scalar curvature is non‑negative and does not admit a Ricci‑flat K\"ahler metric, then, after blowing up 
$M$  at sufficiently many points, the resulting complex surface admits a scalar‑flat K\"ahler metric. The results of \cite{LeBrun} and \cite{KLP} were generalized in \cite{R-S} where they proved the existence of scalar-flat K\"ahler metrics on ${\mathbb {CP}}^2\# 10\overline{\mathbb {CP}}^2$ for specific choice of the blown-up points, and for any other additional blow up. In particular,  a generic choice of further blow-ups produces examples for the Theorem above without holomorphic vector fields. 
\end{Ex}

\section{Modified hyperk\"{a}hler twistor space}\label{modified hyperk}
Let $Tw(M)$ be the twistor space of a compact simple hyperk\"ahler manifold  $(M,I,I,J, g)$,  where  simple means that 
$M$  is simply connected and the space of global holomorphic 2-forms on 
$M$ is one-dimensional. By definition, $Tw(M)$ has a holomorphic fibration $\pi: Tw(M)  \to  \C\proj^1$ and it is known (\cite{Verbitsky}) that for a dense and countable (in the analytic topology) subset $R\subset \C \proj^1$, $\pi^{-1}(\lambda)$ is projective if and only if $\lambda \in R$. Moreover if $(M,\lambda)$ is projective, clearly so is $(M,-\lambda)$. It is well known that $Tw(M)$ does not admit any K\"ahler metric, the following result shows that, as long as one deletes from $Tw(M)$ finetely many divisors, $Tw(M)$ remains non-K\"ahler. As a consequence, $S(M)$ is not Stein. This can be proven even more directly, since $S(M)$ is biholomorphic to $Tw(M)\setminus \{D_1, D_2\}$, where $D_1$ and $D_2$ are divisors, and $Tw(M)\setminus \{D_1, D_2\}$ is not Stein since it admits a holomorphic projection to a complex curve with compact complex submanifolds as fibers. The proof of the following Theorem is based on an idea of M. Verbitsky \cite{Verb2}.

\begin{Teorema} \label{Th4.1}  Let $Tw(M)$  be the twistor space of a compact simple hyperk\"ahler manifold
$(M,I,I,J,g)$.
    Removing a finite  number of divisors from $Tw(M)$  does not affect its non-K\"ahlerianity. In particular, $S(M)$ cannot be Stein.
\end{Teorema}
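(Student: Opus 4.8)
The plan is to reduce the statement to the signature obstruction that makes $Tw(M)$ non-K\"ahler in the first place, after showing that every divisor one might delete is in fact a fibre of the twistor projection $\pi\colon Tw(M)\to\C\proj^1$. Set $\dim_{\R}M=4n$ and let $q$ be the Beauville--Bogomolov--Fujiki form on $H^2(M,\R)$, of signature $(3,b_2-3)$. The three K\"ahler forms of the hyperk\"ahler structure span a positive-definite $3$-plane $W=\langle[\omega_I],[\omega_J],[\omega_K]\rangle$, so $W^{\perp}$ is negative definite, and for $I_\lambda=aI+bJ+cK$ one has $[\omega_{I_\lambda}]=a[\omega_I]+b[\omega_J]+c[\omega_K]\in W$. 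Each fibre $X_\lambda=(M,I_\lambda)$ is compact holomorphically symplectic with period $\Omega_\lambda$, a real class on $X_\lambda$ is of type $(1,1)$ exactly when it is $q$-orthogonal to $\Omega_\lambda$, and as $\lambda$ varies the lines $\C\cdot\Omega_\lambda$ sweep out $W\otimes\C$.

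The key step, and the one I expect to be the crux, is to prove that every prime divisor $E\subset Tw(M)$ is a fibre of $\pi$. Since $\pi$ is proper holomorphic, $\pi(E)$ is a closed irreducible analytic subset of $\C\proj^1$, hence a point or all of $\C\proj^1$. If it is a point $\lambda_0$ then $E\subseteq X_{\lambda_0}$ with $\dim E=2n=\dim X_{\lambda_0}$, so $E=X_{\lambda_0}$. If instead $E$ dominates $\C\proj^1$, then for generic $\lambda$ the intersection $E\cap X_\lambda$ is a non-zero effective divisor of $X_\lambda$; its cohomology class $v\in H^2(M,\R)$ is the restriction of $[E]$, which is $\lambda$-independent because $M$ is simply connected, is of type $(1,1)$ on every $X_\lambda$, and therefore satisfies $q(v,\Omega_\lambda)=0$ for all $\lambda$, i.e.\ $v\in W^{\perp}$. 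On the other hand $E\cap X_\lambda$ is effective and non-zero, so by the Fujiki relation $q(v,[\omega_{I_\lambda}])>0$, since $\int_{E\cap X_\lambda}\omega_{I_\lambda}^{2n-1}>0$; as $[\omega_{I_\lambda}]\in W$ this contradicts $v\in W^{\perp}$. Hence the dominant case cannot occur and $E$ is a fibre.

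Granting this, deleting finitely many divisors deletes finitely many fibres, so $U:=Tw(M)\setminus\bigcup_i D_i=\pi^{-1}(\C\proj^1\setminus F)$ for a finite set $F$. Smoothly $Tw(M)\cong M\times S^2$ with $\pi$ the second projection, whence $U\cong M\times(\C\proj^1\setminus F)$; as $M$ is simply connected and $\C\proj^1\setminus F$ is an open curve, K\"unneth gives $H^2(U,\R)\cong H^2(M,\R)$ and the restriction $H^2(U,\R)\to H^2(X_\lambda,\R)$ is the identity for every $\lambda$. Suppose now $U$ carried a K\"ahler form $\omega$; its class restricts to a single $\kappa\in H^2(M,\R)$. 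Choosing $\lambda\in R\setminus F$ (possible since $R$ is infinite), the compact fibre $X_\lambda\subset U$ is projective and $\kappa$ is a K\"ahler class on it, hence of type $(1,1)$, so $q(\kappa,\Omega_\lambda)=0$; as $R\setminus F$ is dense this holds for all $\lambda$, giving $\kappa\in W^{\perp}$. But $\kappa$ is a K\"ahler class, so $\int_{X_\lambda}\kappa^{2n}>0$ and the Fujiki relation forces $q(\kappa)>0$, contradicting the negative-definiteness of $W^{\perp}$. Thus $U$ is not K\"ahler. Since $S(M)\cong Tw(M)\setminus\{D_1,D_2\}$ with $D_1,D_2$ the fibres over $\pm I$, it is non-K\"ahler, and as every Stein manifold is K\"ahler, $S(M)$ cannot be Stein.

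The only delicate point is the crux step: it is here that simplicity of $M$ (through $h^{2,0}=1$, which underlies the whole period and BBF picture) together with the positivity of the pairing between an effective divisor and a K\"ahler class are indispensable, ruling out divisors dominating $\C\proj^1$ and collapsing the problem to the removal of fibres. Once that is in place the remaining argument is just the familiar compact signature obstruction, transported to the complement via the product structure $U\cong M\times(\C\proj^1\setminus F)$.
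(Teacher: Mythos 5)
Your proof is correct, but it takes a genuinely different route from the paper's. The paper's obstruction (an idea of Verbitsky) is homological and one-dimensional: it takes a curve $C$ in a projective fiber $(M,\lambda)$ (such fibers exist for the dense countable set $R\subset\C\proj^1$), observes that the antiholomorphic involution $\sigma$ of $Tw(M)$ carries $C$ to a holomorphic curve $\sigma(C)\subset(M,-\lambda)$ with $[C]+[\sigma(C)]=0$ in integral homology, and notes that after deleting finitely many divisors --- all of which are fibers, a fact the paper asserts rather than proves --- some pair of projective fibers survives; a K\"ahler form would then give $\int_{[C]}\omega=\int_{[\sigma(C)]}\omega=vol(C)>0$ while integrating to zero over the null class $[C]+[\sigma(C)]$. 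Your obstruction is instead codimension-one and quadratic-form-theoretic: a K\"ahler class on the complement restricts to one class $\kappa\in H^2(M,\R)$ which is of type $(1,1)$ on every surviving fiber, hence lies in the $q$-negative-definite subspace $W^{\perp}$, contradicting positivity of $q$ on K\"ahler classes. What each buys: the paper's argument needs only elementary homology on top of Verbitsky's results (density of projective fibers, classification of divisors), while yours dispenses with projectivity altogether (any fiber works, so the set $R$ plays no role) and actually proves the divisors-are-fibers classification via the Fujiki pairing --- at the cost of invoking the full BBF machinery. Two small refinements: your appeal to $\lambda\in R\setminus F$ in the final step is unnecessary, since $\kappa$ restricts to a K\"ahler class on \emph{every} fiber of $U$; and at the very end the bare Fujiki relation only yields $q(\kappa)^n>0$, so for $n$ even you should either quote the standard positivity of $q$ on the K\"ahler cone or rerun your own pairing trick, deducing $q(\kappa,[\omega_{I_\lambda}])>0$ from $\int_{X_\lambda}\kappa\wedge\omega_{I_\lambda}^{2n-1}>0$, which contradicts $\kappa\in W^{\perp}$ directly and uniformly in $n$.
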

\begin{proof}
    Following \cite{Verbitsky}, 
let $(M, \lambda)$ be a projective hyperk\"ahler variety, which has  complex subvarieties of any codimension. Let $C$ be a odd dimensional such subvariety; for example, let $C$ be a curve. The antiholomorphic map $\sigma:  Tw(M) \to Tw(M)$ sends $(M, \lambda)$ to $(M, -\lambda)$ and $C$ is holomorphically identified with $C\subset (M, -\lambda) $ with reversed orientation. The fundamental classes $[C]$   and $[\sigma(C)]$ are opposite in integral homology, in particular their sum is zero. Since all the divisors in $Tw(M)$ are of the form $(M,\mu)$ for some $\mu \in \C \proj^1$, if one removes a finite number of them, an infinite  number of coupled projective fibers $(M,\lambda)$, $(M,-\lambda)$ will survive. This is a clear obstruction to the existence of a K\"ahler metric, since if a K\"ahler metric $\omega$ existed,   we would have  $$\int_{[\sigma(C)]}\omega=\int_{[C]} \omega=vol(C)$$ and $$\int_{[C]+[\sigma (C)]} \omega=0,
    $$
    which gives a contradiction.
    
\end{proof}
 \begin{rmk}
     The theorem is false in general for non-compact hyperk\"ahler manifolds. Consider for example the twistor space of $\mathbb{H}^n$ equipped with the standard flat hyperk\"ahler metric. Then $Tw(\mathbb{H}^n)$ can be identified with  $\text{Tot}(\mathcal{O}(1)^{2n})$. Removing one of the fibers of the twistor projection yields $\text{Tot}(\mathcal{O}(1)^{2n}\mid_{\C}) \cong  \C^{2n+1}  $.
 \end{rmk}

\begin{rmk} 
Note that $S(M)$ can be defined also  when $(M, I_1, I_2, I_3, g)$ is a  hypercomplex Hopf surface and we can prove that $(S(M), J)$  does not admit any  K\"ahler metric. The constructions of $S(M)$ and $J$ from  Section  2  also work when the Levi-Civita connection is replaced by the Weyl connection,  as  demonstrated by P. Gauduchon \cite{Gauduchon}.   In particular,  the K\"ahler scalar-flat condition could be replaced by  anti-self-dual  locally conformally K\"ahler one. By locally conformally K\"ahler structure  we mean a Hermitian structure   $(I, g, \omega)$ such that $d \omega = \theta \wedge \omega$,  with  $\theta$ a closed 1-form (called the Lee form).  Then,  the complex structure  $I$  is no longer parallel with respect to Levi-Civita, but  it is parallel with respect to the appropriate Weyl connection. When one considers the hypercomplex Hopf surface, then it is anti-self-dual and all complex structures in the family are Weyl-parallel.  We can fix one complex structure $I$ and again consider $S(M)$. It is again biholomorphic to $Tw(M)-\{D_1,D_2\}$,  where $D_i$ are the images of the sections corresponding to $\pm I$. However,  $S(M)$ will contain, as a complex subspace, a copy of a Hopf surface, which is non-K\"ahler. Consequently, $(S(M), J)$ does not admit  any K\"ahler metric.  

\end{rmk}

\end{document}